\documentclass[12pt]{amsart} 
\usepackage{geometry}             

\usepackage{float}
\usepackage{fullpage}
\usepackage[mathscr]{euscript}
\usepackage[all]{xy}
\usepackage{epsfig}
\usepackage[T1]{fontenc}
\usepackage{amsfonts}
\usepackage{caption}

\usepackage{graphicx}
\usepackage{amssymb}
\usepackage{amsmath}
\usepackage{amsthm}
\usepackage{mathrsfs}
\usepackage{epstopdf}
\usepackage{url}

\usepackage[dvipsnames,usenames, table]{xcolor}
\usepackage[colorlinks=true,urlcolor=blue,linkcolor=blue,citecolor=blue]{hyperref}
\hypersetup{
	linkbordercolor={1 0 0}, 
	citebordercolor={0 1 0} 
}
\usepackage{cite}
\usepackage[noabbrev]{cleveref}

\usepackage{color}
\usepackage{soul} 
\setstcolor{red}

\usepackage{tikz}

\textwidth 6.20in    
\textheight 8.65in
\oddsidemargin.245in    
\evensidemargin.245in     
\footskip 0.5in
\headsep= 0.35in
\topmargin -0.18in

\DeclareGraphicsRule{.tif}{png}{.png}{`convert #1 `dirname #1`/`basename #1 .tif`.png}


\newcommand{\ga}{\gamma}

\newcommand{\la}{\lambda}

\newcommand{\De}{\Delta}
\newcommand{\Si}{\Sigma}
\newcommand{\ZZ}{{\mathbb Z}}


\newcommand{\cD}{\mathcal D}
\newcommand{\cE}{\mathscr E}

\newcommand{\cL}{\mathcal L}

\newcommand{\Tr}{\operatorname{Tr}}

\newcommand{\vlk}{\operatorname{{\it v}\ell{\it k}}}
\newcommand{\Int}{\operatorname{Int}}
\newcommand{\sm}{\smallsetminus}
\newcommand{\co}{\colon}
\newcommand{\lb}{\left\langle}
\newcommand{\rb}{\right\rangle}
\newcommand{\Tube}{\operatorname{Tube}}

\newtheorem{theorem}{Theorem}[section]

\newtheorem{proposition}[theorem]{Proposition}

\newtheorem{corollary}[theorem]{Corollary}

\newtheorem*{theorem*}{Theorem}

\theoremstyle{definition}     
\newtheorem{definition}[theorem]{Definition}

\theoremstyle{remark}
\newtheorem{remark}[theorem]{Remark}
\newtheorem{example}[theorem]{Example}
\newtheorem{problem}[theorem]{Problem}

\title[Classical results for alternating virtual links]{Classical results for alternating virtual links}
\author[Hans U. Boden]{Hans U. Boden}
\address{Mathematics \& Statistics, McMaster University, Hamilton, Ontario}
\email{boden@mcmaster.ca}

\author[Homayun Karimi]{Homayun Karimi}
\address{Mathematics \& Statistics, McMaster University, Hamilton, Ontario}
\email{karimih@mcmaster.ca}

\subjclass[2020]{Primary: 57K12}
\keywords{Alternating link, virtual link, split link, checkerboard coloring, determinant, almost classical link, Alexander polynomial, welded link, branched double cover, Tait conjectures}


\pagestyle{headings}

\begin{document}

\begin{abstract}

We extend some classical results of Bankwitz, Crowell, and Murasugi to the setting of virtual links. For instance, we show that an alternating virtual link is split if and only if it is visibly split, and that the Alexander polynomial of any almost classical alternating virtual link is alternating. The first result is a consequence of an inequality relating the link determinant and crossing number for any non-split alternating virtual link.  The second is a consequence of the matrix-tree theorem of Bott and Mayberry.
We extend the first result to semi-alternating virtual links. We discuss the Tait conjectures for virtual and welded links and note that Tait's second conjecture is not true for alternating welded links.

\end{abstract}

\maketitle
\setcounter{section}{1} \noindent
\subsection{Introduction}  \label{sec-1} 

In this paper, we establish conditions that are satisfied by invariants of alternating virtual links, such as the link determinant and Alexander polynomial. As an application, we deduce that a reduced alternating virtual link diagram is split if and only if it is visibly split.

A  link is said to be \textit{alternating} if it admits an alternating diagram, and a diagram is alternating if the crossings alternate between over and under-crossing as one travels around any component. This applies to classical and virtual links, with the proviso that virtual crossings are ignored.
 
In \cite{Bankwitz}, Bankwitz proved that $\det(L) \geq c(L)$ for any non-split alternating link $L$, where $\det(L)$ denotes the link determinant and $c(L)$ the crossing number of $L$. In \cite{Crowell, Murasugi-1958}, Crowell and Murasugi independently proved that the Alexander polynomial of an alternating link is alternating. Here, a Laurent polynomial $\Delta_L(t)=\sum c_i t^i$ is said to be \textit{alternating} if its coefficients satisfy $(-1)^{i+j} c_i c_j \geq 0$. 

We extend the results of Bankwitz, Crowell, and Murasugi to alternating virtual links. Virtual knots were introduced by Kauffman in \cite{KVKT}, and they represent a natural generalization to knots in thickened surfaces up to stabilization. Classical knots embed faithfully into virtual knot theory \cite{GPV}, and many  invariants from classical knot theory extend in a natural way to the virtual setting.

For example, the link determinant $\det(L)$ is defined in terms of the coloring matrix and extends to checkerboard colorable virtual links (defined below). The link $L$ admits a $p$-coloring if and only if $p$ divides $\det(L)$. 
One of our main results is that $\det(L) \geq c(L)$ for any non-split alternating virtual link $L,$ where $c(L)$ is the \textit{classical crossing number} of $L$. This result applies to show that a reduced alternating virtual link diagram is split if and only if it is visibly split.

The Alexander polynomial $\Delta_L(t)$  is defined in terms of the Alexander module of $L$, and it extends to almost classical virtual links (defined below). Another one of our main results is that, for any reduced alternating link $L$ that is almost classical, its Alexander polynomial $\Delta_{L}(t)$ is alternating. To prove this result, we appeal to the Matrix-Tree Theorem. It applies to show that many virtual knots cannot be represented by alternating diagrams.

The link determinant and Alexander polynomial are both invariant under welded equivalence. Therefore, our main results can be seen as providing restrictions on a virtual link diagram for it to be welded equivalent to an alternating virtual link. This is discussed at the end of the paper, where we state open problems related to the Tait conjectures for welded links.

We provide a brief synopsis of the contents of the rest of this paper.
In \ref{sec-2}, we review background material on links in thickened surfaces and virtual and welded links, together with Cheng coloring and Alexander numbering for virtual links. 
In \ref{sec-3}, we review the link group and determinant. 
In  \ref{sec-4}, we recall the Matrix-Tree Theorem, which is used to prove one of the main results.
In \ref{sec-5}, we prove that split alternating virtual links are visibly split. 
In \ref{sec-6}, we prove analogous results for semi-alternating links,
and in \ref{sec-7}, we present a discussion on the Tait conjectures for welded links and state some interesting open problems. 


\setcounter{theorem}{0}  
\setcounter{section}{2} \noindent
\subsection{Virtual links} \label{sec-2}

In this section we review the basic properties of virtual  links,
including Gauss diagrams, links in thickened surfaces, welded links, ribbon torus links, alternating virtual links, virtual linking numbers, Cheng colorings, and Alexander numberings.

\medskip\noindent{\bf Virtual link diagrams.} Virtual links are defined as equivalence classes of virtual link diagrams. Here, a virtual link diagram is an  immersion of one or more circles in the plane with only finitely many regular singularities, each of which is a double point.  Each double point is either classical (indicated by over- and under-crossings) or virtual (indicated by a circle). Two diagrams are said to be virtually equivalent if they can be related by planar isotopies and a series of \textit{generalized Reidemeister moves} ($r1$)--($r3$) and ($v1$)--($v4$) depicted in \Cref{VRM}. 

An orientation for a virtual link is obtained by choosing an orientation for each component. For a diagram $D$, the orientation is usually indicated by placing one arrow on each component of $D$.

\begin{figure}[ht]
\centering
\def\svgwidth{250pt}
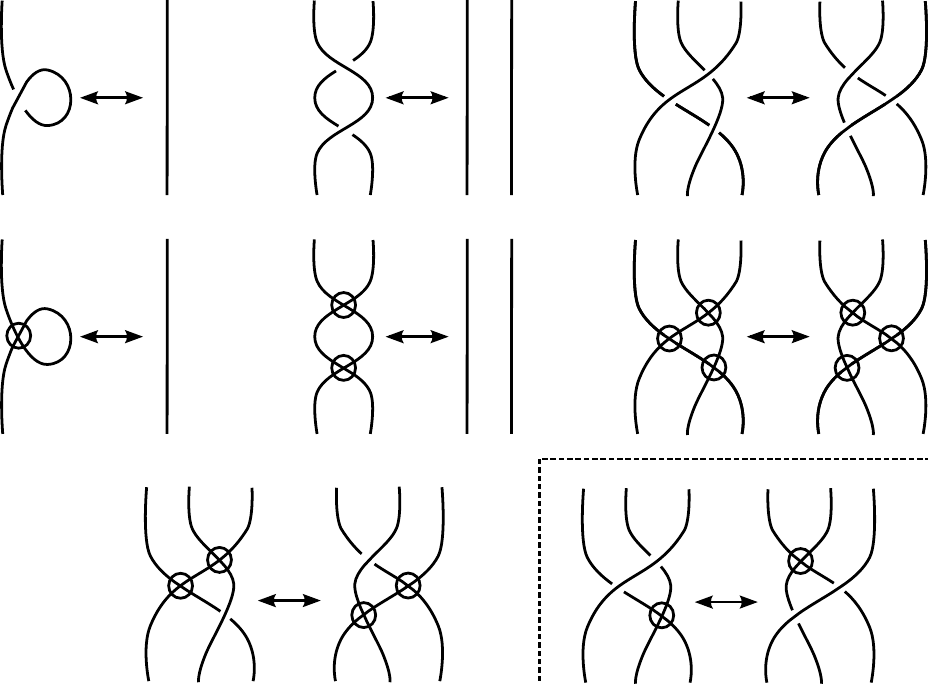
\caption{\small The generalized Reidemeister moves ($r1$)--($r3$), ($v1$)--($v4$) and the forbidden move ($f1$).}
\label{VRM}
\end{figure}

\medskip\noindent{\bf Gauss diagrams.} Virtual links can also be defined as equivalence classes of Gauss diagrams, which consist of one or more circles traversed counterclockwise, together with directed chords on the circles representing the classical crossings. The chords point from over-crossings to under-crossings and are decorated with a sign ($+$ or $-$) to indicate whether the crossing is positive or negative. Each virtual link diagram determines a Gauss diagram, and vice versa, and this correspondence is well-defined up to moves ($v1$)--($v4$). The Reidemeister moves can be translated into moves between Gauss diagrams, and in this way a virtual link can be regarded as an equivalence class of Gauss diagrams. By convention, the core circles of a Gauss diagram are oriented counterclockwise. 

Notice that the Gauss diagram does not keep track of the virtual crossings. In effect, the virtual crossings are not really there, rather they are an inevitable consequence of trying to represent a non-planar virtual link diagram by a diagram in the plane.

A virtual link diagram is said to be \textit{split} if its associated Gauss diagram is disconnected, and
a virtual link is \textit{split} if it can be represented by a split diagram.
For classical links, this agrees with the usual definition. For virtual links, a diagram can be split and connected. However, any diagram that is split can be transformed into a disconnected diagram using moves ($v1$)--($v4$).

\medskip\noindent{\bf Links in thickened surfaces.}
A third approach is to define virtual links as stable equivalence classes of links in thickened surfaces, and we take a moment to explain this. 

Let $\Si$ be a closed, oriented surface and $I =[0,1]$.
Consider a link $\cL \subset \Si \times I$ in the thickened surface, up to isotopy. 
Let $p\co \Si \times I \to \Si$ be the projection map.

Stabilization is the operation of adding a 1-handle to $\Si$, disjoint from $p(\cL)$, and destabilization is the opposite procedure.
Two links $\cL \subset \Si\times I$ and $\cL' \subset \Si' \times I$ are said to be \textit{stably equivalent} if one is obtained from the other by a finite sequence of stabilizations, destablizations, and orientation-preserving diffeomorphisms of the pairs 
$(\Si\times I, \Si \times \{0\})$ and $(\Si'\times I, \Si' \times \{0\})$.
In \cite{Carter-Kamada-Saito}, Carter, Kamada, and Saito show there is a one-to-one correspondence between virtual links and stable equivalence classes of links in thickened surfaces.

Thus, every virtual link can be represented  as a link in a thickened surface. Further, any such link itself can be represented as a link diagram on $\Si.$ A \textit{link diagram} $\cD$ on $\Si$ is a tetravalent graph with over= and under-crossing information drawn at each vertex in the usual way.

A link diagram $\cD$ on $\Si$ is said to be a \textit{split diagram} if it is disconnected, and
a link in $\Si \times I$ is said to be \textit{split} if it can be represented by a split diagram.

\medskip\noindent{\bf Welded links.} Two virtual links are said to be \textit{welded equivalent} if one can be obtained from the other by a sequence of generalized Reidemeister moves and the first forbidden move ($f1$) as depicted in \Cref{VRM}. 
In terms of Gauss diagrams, the first forbidden move corresponds to exchanging two adjacent arrow feet without changing their signs or arrowheads, see \Cref{f1-r1}.  Therefore, a welded link can also be viewed as an equivalence class of Gauss diagrams. 
 
\begin{figure}[ht]
\centering
\includegraphics[scale=0.80]{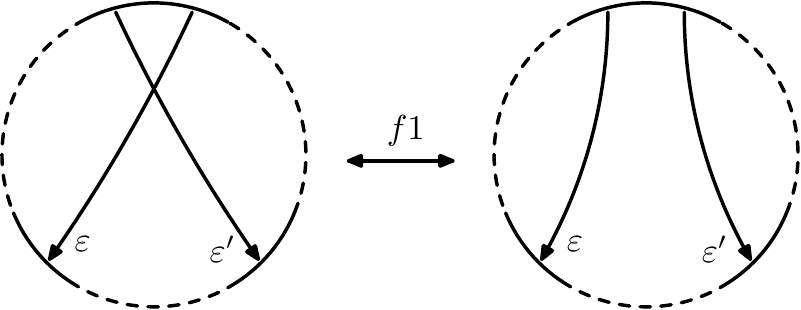}
\caption{\small The forbidden overpass $(f1)$ for Gauss diagrams.}
\label{f1-r1}
\end{figure}

\medskip\noindent{\bf Ribbon torus links.}
Every welded link determines a ribbon knotted surface in $S^4$. This is based on a beautiful construction by Satoh \cite{Satoh}, which associates to a welded link $L$ a ribbon torus link $\Tube(L)$ in $S^4$. In \cite{Satoh}, Satoh shows that every ribbon torus link occurs as $\Tube(L)$ for some welded link, and  that $\pi_1(S^4  \sm \Tube(L))$  is isomorphic to the link group $G_L$, defined below.

The correspondence between welded links and ribbon torus links is not one-to-one,  see \cite{Winter}. It is an open problem to determine necessary and sufficient conditions for two welded knots to represent the same ribbon torus knot (cf. \cite[Question 3.6]{Audoux}).

\medskip\noindent{\bf Alternating virtual links.} 
A virtual link diagram $D$ is called \textit{alternating} if the classical crossings alternate between over-crossing and under-crossing as we go around each component. A Gauss diagram is alternating if it alternates between arrow heads and tails when going around each of the core circles. A virtual or welded link is called \textit{alternating} if it can be represented by an alternating virtual link diagram. For example, consider the virtual links in \Cref{vhopf}. The virtual Borromean rings is alternating, but the virtual Hopf link is not.

\medskip\noindent{\bf Virtual linking numbers.} 
If $J$ and $K$ are oriented virtual knots, then the virtual linking number $\vlk(J, K)$ is defined as the sum of the writhe of the classical crossings where $J$ goes over $K$. Using the same definition, we can define $\vlk(J, K)$ more generally
when $J$ and $K$ are oriented virtual links. Note that $\vlk(J, K)$ is additive, namely if $J = J' \cup J''$, then $\vlk(J, K) = \vlk(J', K) + \vlk(J'', K)$, and likewise if $K = K' \cup K''.$
The virtual linking numbers are not symmetric, i.e., it is not generally true that $\vlk(J, K) = \vlk(K, J)$. For example, consider the oriented virtual links in \Cref{vhopf}. For the virtual Hopf link, we have $\vlk(J,K)=1$ and $\vlk(K,J)=0$, and for the virtual Borromean rings, we have $$\vlk(I,J)=\vlk(J,K)=\vlk(K,I)=0,\; \vlk(J,I)=\vlk(K,J)=1, \text{ and } \vlk(I,K)=-1.$$  

\begin{figure}[ht]
\includegraphics[scale=1.00]{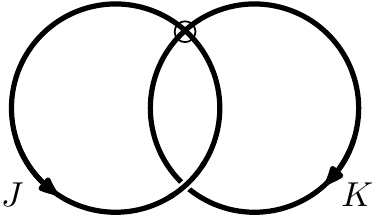} \qquad \qquad  
\includegraphics[scale=1.00]{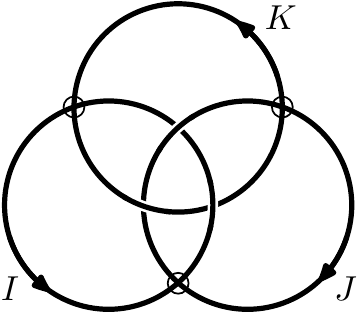}  
\caption{\small The virtual Hopf link and the virtual Borromean rings.} \label{vhopf}
\end{figure}

\medskip\noindent{\bf Cheng colorings.} 
Given a virtual link diagram $D$, a \textit{Cheng coloring} of $D$ is an assignment of integer labels to each arc of $D$ that satisfies the local rules in \Cref{fig-Cheng}. An elementary exercise shows if $D$ and $D'$ are two virtual link diagrams that are related by virtual Reidemeister moves, then $D$ admits a Cheng coloring if and only if $D'$ does. A virtual link $L$ is said to be \textit{Cheng colorable} if it can be represented by a virtual link diagram with a Cheng coloring. 

Not all virtual links are Cheng colorable. For example, the virtual Hopf link in \Cref{vhopf} is not Cheng colorable. More generally, given a virtual link $L = K_1 \cup \cdots \cup K_m$ with $m$ components, then an elementary argument shows that $L$ admits a Cheng coloring if and only if it satisfies $$\vlk(K_i,L\sm K_i)=\vlk(L\sm K_i, K_i)=0$$ for each $i=1,\ldots, m.$

\begin{figure}[h]
\centering
\includegraphics[scale=0.90]{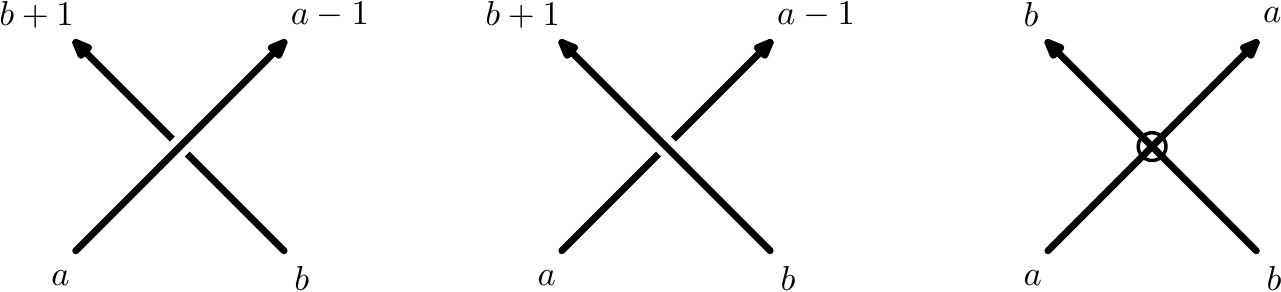}
\caption{\small Local rules for a Cheng coloring at classical and virtual crossings.}
\label{fig-Cheng}
\end{figure}

\medskip\noindent{\bf Alexander numberings and almost classical links.} 
Given a virtual link diagram $D$, an \textit{Alexander numbering} on $D$ is an assignment of integer labels to each arc of $D$ that satisfies the local rules in \Cref{fig-Alexander}. If $D$ admits an integer labeling that satisfies the local rules mod $p$, then $D$ is said to be mod $p$ Alexander numberable. 

Notice that if $D$ is Alexander numberable, then it is Cheng colorable. Conversely, a virtual link diagram $D$ is Alexander numberable if and only if it admits a Cheng coloring such that the arc labels satisfy $b=a-1$ at each classical crossing.  Likewise, a virtual link diagram $D$ is mod $p$ Alexander numberable if and only if it admits a Cheng coloring such that the arc labels satisfy $b\equiv a-1$ (mod $p$) at each classical crossing.

\begin{figure}[h]
\centering
\includegraphics[scale=0.90]{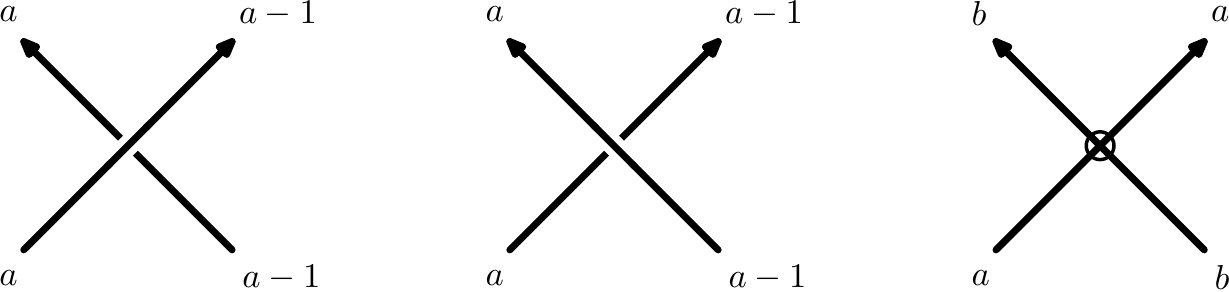}
\caption{\small Local rules for an Alexander numbering  at classical and virtual crossings.}
\label{fig-Alexander}
\end{figure}

A virtual link is said to be \textit{almost classical} if it admits an Alexander numberable diagram, and it is said to be \textit{checkerboard colorable} if it admits a mod 2 Alexander numberable diagram.

Recall from \cite[Theorem 6.1]{Boden-Gaudreau-Harper-2017} that a virtual link is almost classical if and only if it can be represented by a null-homologous link $\cL \subset \Si \times I$, or equivalently if $\cL$ admits a Seifert surface. 

Recall also from \cite[Proposition 1.1]{Boden-Chrisman-Karimi-2021} that a virtual link is checkerboard colorable if and only if it can be represented by a $\ZZ/2$ null-homologous link $\cL \subset \Si \times I$, or equivalently if $\cL$ admits an unoriented spanning surface. This is the case if and only if $\cL$ can be represented by a checkerboard colorable diagram on $\Si$. 

If a virtual link admits a diagram which is Cheng colorable, then any diagram for the same link is also Cheng colorable. The reason is that Cheng colorings extend along generalized Reidemeister moves. The same thing is not true for Alexander numberings. Indeed, one can easily find two virtual link diagrams for the same link such that one of them is Alexander numberable and the other is not. Thus, Alexander numberings of virtual links do not always extend along generalized Reidemeister moves.

However, if two virtual knot diagrams are Alexander numberable and are related by generalized Reidemeister moves, then one can arrange that they are related through Alexander numberable diagrams.  More precisely, suppose $D$ and $D'$ are two virtual knot diagrams and
\begin{equation} \label{eq-chain}
D =D_1 \sim D_2 \sim \cdots \sim D_r = D'\
\end{equation}
is a chain of diagrams, where  $D_{i+1}$ is obtained from $D_i$ by a single generalized Reidemeister move. If $D$ and $D'$ are Alexander numberable, then there is a chain \eqref{eq-chain} such that each $D_i$ is Alexander numberable. A similar result holds if $D$ and $D'$ are assumed to be mod $p$ Alexander numberable. These statements can be proved using parity projection, see \cite{Manturov, Nikonov-2016}. Any minimal crossing diagram of an almost classical link is Alexander numberable, and this can also be proved using parity projection, see \cite{Rushworth-2021, Boden-Rushworth}.

The corresponding statements for welded knots and links are either not true, or not known to be true. 

\setcounter{section}{3} \noindent
\subsection{Link group, Alexander module, and determinant} \label{sec-3}
In this section, we introduce the link group and the Alexander module associated to a virtual link. We also recall the definition of the link determinant associated to a checkerboard colorable virtual link and show that $\det(L)=0$ when $L$ is split. Finally, we discuss mod $p$ labelings of virtual knots and show that $K$ admits a mod $p$ labeling if and only if $\det(K)=0$ (mod $p$).

\medskip\noindent{\bf Link Group.}  
For classical links, the link group is just the fundamental group of the complement of the link. For a link $L$, this group is denoted $G_L$. Thus, $G_L =\pi_1(X_L)$ where $X_L$ is the result of removing an open tubular neighborhood of $L$ from $S^3$. 

As an invariant of classical knots, the knot group is an unknot detector, indeed the only classical knot $K$ whose knot group  is infinite cyclic is the trivial knot. In fact, Waldhausen's theorem implies  that the knot group together with its peripheral structure is a \textit{complete invariant} of classical knots, which is to say that two classical knots are equivalent if and only if they have isomorphic knot groups with equivalent peripheral structures.

The link group generalizes in a natural way to give an invariant of virtual links by means of Wirtinger presentations. In fact, the abstract group together with its peripheral structure are invariant under the first forbidden move and thus define invariants of the underlying welded link.

Given a virtual link diagram for $L$, we will describe the \textit{Wirtinger presentation} of $G_L$.  Let $D$ be a regular projection of $L$, and suppose it has $n$ classical crossings. We define a \textit{long arc} of the diagram $D$ to be one that goes from one classical under-crossing to the next, passing through virtual crossings as it goes. Enumerate the long arcs of $D$ by $x_{1},\ldots,x_m$ and the classical crossings by $c_{1},\ldots,c_n$.

\begin{figure}[h!]
\centering\includegraphics[scale=0.90]{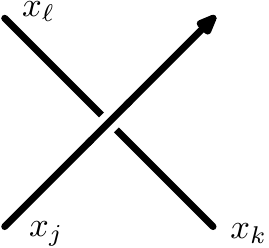}
\caption{\small Arc labels at a crossing.} \label{crossing-group}
\end{figure}

For each crossing, labeled as in \Cref{crossing-group}, we  have the relation $r_i =  x_{\ell}x_{j}^{-1}x_{k}^{-1}x_{j}$. The Wirtinger presentation for $G_{L}$ is then:
\begin{equation} \label{eq:Wirt}
G_{L}=\langle x_{1},\ldots,x_m \mid r_1,\ldots,r_n\rangle.
\end{equation}

\medskip\noindent{\bf Alexander module.}
In order to define the \textit{Alexander module}, we briefly recall \textit{Fox differentiation}. Let $F_m$ be the free group on $m$ generators, so elements of $F_m$ are words in $x_1,\ldots, x_m$. For $j=1,\ldots, m$, the Fox derivative $\partial/\partial x_j$ is an endomorphism of $\ZZ[F_m]$, the group ring, defined so that $\partial/\partial x_j(1) =0$ and 
$$\frac{\partial }{\partial x_{j}}(x_{i}) =\begin{cases} 1 & \text{if $i=j$,} \\ 0 & \text{otherwise.} \end{cases}$$ 
Further, given words $w,z \in F_m$, the Fox derivative satisfies the Leibnitz rule:
$$\frac{\partial }{\partial x_{j}}(wz)=\frac{\partial}{\partial x_{j}}(w)+w\frac{\partial}{\partial x_{j}}(z).$$ 
These relations completely determine $\partial/\partial x_{j}$ on every word $w \in F_m,$ and it is extended linearly to the group ring $\ZZ[F_m]$.

We use this to describe the construction of the Alexander module associated to a link $L$. Let $G_{L}'=[G_L,G_L]$ and $G_{L}''=[G_{L}',G_{L}']$ be the first and second commutator subgroups, then the Alexander module is the quotient $G_{L}'/G_{L}''$. It is a finitely generated module over $\ZZ[t,t^{-1}]$, the ring of Laurent polynomials, and it is determined by the \textit{Fox Jacobian matrix} $A$ as follows. Here, $A$ is the $n \times m$ matrix with $ij$ entry equal to $\left. \frac{\partial r_i}{\partial x_j}\right|_{x_1,\ldots, x_m=t}$. In particular, the Fox Jacobian is obtained by Fox differentiating the relations $r_i$  with respect to the generators $x_{j}$ and applying the abelianization map $x_{j}\mapsto t$ for $j=1,\ldots,m$. We define the $k$-th \textit{elementary ideal} $\cE_k$ as the ideal of $\ZZ[t,t^{-1}]$ generated by all $(n-k)\times(n-k)$ minors of $A$. 

The matrix $A$ depends on the choice of a presentation for $G_{L}$, but the associated sequence of elementary ideals 
$$\{0\}=\cE_0\subset\cE_{1}\subset\ldots\subset\cE_{n}=\ZZ[t,t^{-1}]$$
does not. 

For any classical link $L$, the first elementary ideal $\cE_{1}$ is a principal ideal, and the Alexander polynomial $\Delta_{L}(t)$ is defined as the generator of $\cE_{1}$. The Alexander polynomial is well-defined up to multiplication by $\pm t^{k}$ for $k\in\ZZ$. It is obtained by taking the determinant of the \textit{Alexander matrix}, which is the $(n-1)\times(n-1)$ matrix obtained by removing a row and column from $A$.

For a virtual link $L$, one can mimic the construction of the Alexander module by regarding the quotient $G_L'/G_L''$ as a module over $\ZZ[t,t^{-1}],$  This can be used to define elementary ideals and the Alexander polynomial for virtual links. However, in contrast to the case of classical links, the first elementary ideal may not be principal. One way to remedy the situation is to replace the elementary ideals $\cE_k$ with the smallest principal ideal containing them. For instance, this would suggest a way to define an Alexander polynomial for a virtual link $L$ to be a generator of the principal ideal containing  $\cE_1$. However, since the link group itself is only an invariant of the associated \textit{welded link},
the invariants one obtains in this way will not be very refined. Indeed, for many virtual knots, the Alexander polynomial is trivial.  

\medskip\noindent
{\bf Link determinant for checkerboard colorable virtual links.}
 We review the definition of the link determinant in terms of the coloring matrix and
show that it extends to an invariant of checkerboard colorable virtual links. 
We also prove that the determinant of the coloring matrix is odd for any checkerboard colorable virtual knot, 
and that a checkerboard colorable virtual knot $K$ admits a mod $p$ labeling if and only if $p$ divides $\det(K)$.

Let $L$ be a virtual link that is represented by a checkerboard colorable diagram $D$ with $n$ classical crossings $\{c_1,\ldots, c_n \}$ and $m$ long arcs $\{a_1,\ldots, a_{m}\}.$  If $D$  has $k$ connected components, then $m = n+k-1$.

Define the $n\times m$ coloring matrix $B(D)$ so that its $ij$ entry is given by
\begin{eqnarray*}
b_{ij}(D) &=&\begin{cases}
2,&  \text{if $a_j$ is the over-crossing arc at $c_i$},\\
-1,& \text{if $a_j$ is one of the under-crossing arcs at $c_i$},\\
0,& \text{otherwise}.
\end{cases}
\end{eqnarray*}
In case $a_j$ is coincidentally the over-crossing arc and one of the under-crossing arcs at $c_i$,  then we set $b_{ij}(D)=1$. In that case, if $a_{k}$ is the other under-crossing arc at $c_i$, then we set $b_{ik}(D)=-1$.

Note that the matrix $B(D)$ is the one obtained by specializing the Fox Jacobian matrix $A(D)$ at $t=-1$.\footnote{This is only true up to sign for any given row.} Here, $A(D)$ is defined in terms of taking Fox derivatives of the Wirtinger presentation of the link group $G_D$ whose generators are given by the arcs $a_1,\ldots, a_m$ and relations are given by classical crossings $c_1,\ldots, c_n$ and applying the abelianization homomorphism $G_L \to \lb t \rb, \ a_i \mapsto t$. For details, see \cite[Section 5]{Boden-Gaudreau-Harper-2017}.

Notice that the entries in each row of $B(D)$ sum to zero, therefore, it has rank at most $n-1$.  The proof of the next result is similar to that of \cite[Proposition 2.6]{Boden-Nicas-White}.

\begin{proposition} \label{prop-alex-det}
Any two  $(n-1) \times (n-1)$ minors of $B(D)$ are equal up to sign. The absolute value of the minor is independent of the choice of checkerboard colorable diagram $D$. It defines an  invariant of checkerboard colorable links $L$ called the \textit{determinant} of $L$ and denoted $\det(L)$.
\end{proposition}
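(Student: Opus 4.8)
The plan is to separate the two assertions: first the purely linear-algebraic claim that the $(n-1)\times(n-1)$ minors of $B(D)$ agree up to sign, and then the invariance of their common absolute value under the generalized Reidemeister moves.

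For the first assertion I would use two linear relations among the rows and columns of $B=B(D)$. The first is already recorded: every row sums to zero, so $B\mathbf{1}=0$ and the all-ones vector $\mathbf{1}$, whose entries are units, spans a line in the right kernel. The second comes from the fact that, up to the sign of each row, $B$ is the specialization at $t=-1$ of the Fox Jacobian $A(D)$. Since the Wirtinger relations $r_1,\dots,r_n$ are dependent (travelling once around the diagram exhibits one relation as a product of conjugates of the others), there is a dependence $\sum_i u_i\,(\mathrm{row}_i)=0$ among the rows of $A(D)$ whose coefficients $u_i$ are units $\pm t^{k_i}$; evaluating at $t=-1$ produces a left null vector of $B$ with entries $u_i=\pm1$, and the mod $2$ Alexander numbering coming from checkerboard colorability is precisely what keeps the row signs coherent so that such a vector exists.

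For a connected diagram we have $m=n$, so $B$ is square with $\det B=0$, and I would conclude with the adjugate $\mathrm{adj}(B)$, whose $(j,i)$ entry is the cofactor $(-1)^{i+j}\det(B_{\hat\imath\hat\jmath})$. From $B\,\mathrm{adj}(B)=0$ each column of $\mathrm{adj}(B)$ lies in the right kernel and is therefore a multiple of $\mathbf{1}$; from $\mathrm{adj}(B)\,B=0$ each row is a multiple of the unit left null vector. Comparing the two descriptions forces $(-1)^{i+j}\det(B_{\hat\imath\hat\jmath})=c\,u_i$ for one scalar $c$ and units $u_i=\pm1$, so every first minor equals $\pm c$, which is the claim. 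When $D$ is disconnected, $B$ is block diagonal and, since each block already has rank at most one less than its number of crossings, the total rank is at most $n-k\le n-2$; hence all $(n-1)\times(n-1)$ minors vanish, consistent with $\det(L)=0$ for split links.

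For the second assertion I would argue that $\det(L)$ is really an invariant of the Alexander module rather than of the diagram. The absolute value of any first minor of $B(D)$ generates the ideal obtained by specializing $\cE_1$ at $t=-1$ (the per-row sign ambiguity in the identification $B(D)=A(D)|_{t=-1}$ does not affect absolute values of minors), and $\cE_1$ was noted above to be independent of the chosen presentation of $G_L$, which is itself a welded, hence virtual, invariant. Two checkerboard colorable diagrams of the same link therefore specialize to the same ideal in $\ZZ$ and yield the same determinant. I expect the main obstacle to be the honest construction of the unit left null vector, i.e.\ showing that relation redundancy together with checkerboard colorability yields a dependence among the rows of $B$ with coefficients $\pm1$; this is exactly the input that drives the adjugate argument and is the one place where checkerboard colorability is genuinely used. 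Routing invariance through $\cE_1$, rather than through a move-by-move verification, has the further benefit of avoiding the subtlety that checkerboard colorings need not extend along every generalized Reidemeister move.
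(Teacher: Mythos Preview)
Your adjugate framework is sound, and routing invariance through $\cE_1|_{t=-1}$ is fine. The gap is in the construction of the unit left null vector. You propose to obtain it from a unit-coefficient dependence $\sum_i u_i\,(\text{row}_i)=0$ among the rows of $A(D)$ over $\ZZ[t,t^{-1}]$, invoking the fact that ``travelling once around the diagram exhibits one relation as a product of conjugates of the others.'' But that argument is specific to \emph{planar} diagrams: it uses the $2$-cell bounded by the outer contour, along whose boundary the conjugated Wirtinger relators compose to the identity in the free group. For a virtual diagram (equivalently, a diagram on a higher-genus surface) there is no such $2$-cell, and the Wirtinger relations are in general not redundant. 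Indeed, a row dependence of $A(D)$ with unit coefficients, together with the column relation $A\mathbf{1}=0$ that you already have, would force all $(n-1)\times(n-1)$ minors of $A$ to agree up to units and hence $\cE_1$ to be principal; the paper records that $\cE_1$ need not be principal for virtual links, and principality is asserted only in the almost classical case. So the dependence you want does not exist at the level of $A$; it appears only after specializing to $t=-1$, and checkerboard colorability is what manufactures it there.

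The paper builds the left null vector of $B$ directly from the mod~$2$ Alexander numbering, bypassing $A$ entirely: set $\gamma_i=(-1)^{\lambda_i}$, where $\lambda_i\in\{0,1\}$ is the mod~$2$ Alexander number on the incoming under-crossing arc at $c_i$, and check combinatorially---by travelling along each long arc and observing that the Alexander numbers on the transverse arcs, and hence the signs $\gamma_i$ at the successive crossings, alternate---that $\sum_i\gamma_i\,(\text{row}_i)=0$ in $B$. Your instinct that the mod~$2$ numbering is the essential input is correct; the fix is to use it to construct the null vector of $B$ outright, rather than to rescue a row dependence of $A$ that is not actually there.
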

\begin{proof}
As previously noted, the columns of $B(D)$ always sum to zero, and we will use checkerboard colorability to derive a linear relation among the rows. Recall that the diagram $D$ is checkerboard colorable if and only if it admits a mod 2 Alexander numbering. For each crossing $c_i$ of $D$, let $\ga_i=(-1)^{\la_i},$ where $\la_i\in\{0,1\}$ is the Alexander number on the incoming under-crossing at $c_i$. Then we claim that one obtains a linear relation on the rows by multiplying the $i$-th row of $B(D)$ by $\ga_i$. 

To see why this is true, notice that the columns of $B(D)$ correspond to arcs of the diagram, and in any given column, there are nonzero entries  for each crossing the arc is involved in. The arc starts and ends with under-crossings, and the associated column entries are both $-1$. Every time the arc crosses over another arc, there is an associated column entry equal to 2. Since the diagram is mod 2 Alexander numberable, the numbers on the transverse arcs alternate between $0$ and $1$ as one travels along the arc. Consequently, the coefficients $\gamma_i$ alternate in sign as one travels along the arc. Therefore, after multiplying the $i$-th row by $\ga_i$, this shows that the entries in each column sum to zero. Furthermore, since each coefficient $\ga_i$ is a unit, every row of $B(D)$ is a linear combination of the other rows. This shows that the $(n-1) \times (n-1)$ minors of $B(D)$ are all equal up to sign.
\end{proof}

\begin{proposition}\label{split det}
Suppose $L$ is a checkerboard colorable virtual link. If $L$ is split, then $\det(L)=0$. 
\end{proposition}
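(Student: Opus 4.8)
The plan is to compute $\det(L)$ from a checkerboard colorable diagram $D$ that is \emph{disconnected}, say $D = D_1 \sqcup \cdots \sqcup D_s$ with $s \ge 2$, and to exploit the resulting block structure of the coloring matrix. By \Cref{prop-alex-det} the quantity $\det(L)$ may be computed from any checkerboard colorable diagram, so this is legitimate once such a $D$ is found. Since each long arc of $D$ lies in a single piece $D_i$ and no classical crossing involves arcs of two different pieces, the matrix $B(D)$ is block diagonal, $B(D) = \mathrm{diag}\bigl(B(D_1),\ldots,B(D_s)\bigr)$, where $B(D_i)$ is the $n_i \times m_i$ coloring matrix of $D_i$ and $n = n_1 + \cdots + n_s$. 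I would then bound the rank of $B(D)$ and conclude that every $(n-1)\times(n-1)$ minor vanishes.

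For the rank bound I would apply the argument of \Cref{prop-alex-det} to each block. Restricting a mod $2$ Alexander numbering of $D$ to the piece $D_i$ shows that $D_i$ is itself checkerboard colorable, so there are signs $\ga_i \in \{\pm 1\}$ giving a nontrivial linear relation among the rows of $B(D_i)$; hence $\mathrm{rank}\, B(D_i) \le n_i - 1$ whenever $n_i \ge 1$. Because the rank of a block diagonal matrix is the sum of the ranks of its blocks, $\mathrm{rank}\, B(D) \le \sum_i (n_i-1) = n - s \le n-2$. Consequently every $(n-1)\times(n-1)$ submatrix of $B(D)$ is singular, so all $(n-1)\times(n-1)$ minors vanish and $\det(L)=0$. (If some $D_i$ has no classical crossing, then its single arc contributes a zero column to $B(D)$, which forces the relevant minors to vanish directly, so the degenerate case causes no trouble.)

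The step needing the most care is the first one: producing a diagram of $L$ that is simultaneously split and checkerboard colorable. This is delicate because, as noted earlier, mod $p$ Alexander numberings need not persist along generalized Reidemeister moves, so an arbitrary split diagram of $L$ need not be checkerboard colorable. Rather than chase diagrams, I would use the homological characterization recalled from \cite{Boden-Chrisman-Karimi-2021}: a virtual link is checkerboard colorable if and only if it is $\ZZ/2$ null-homologous in some $\Si \times I$. Writing the split link as $L = L_1 \sqcup \cdots \sqcup L_s$ realized on a connected sum $\Si = \Si_1 \# \cdots \# \Si_s$ with $L_i$ supported in the $\Si_i$ summand, the class $[\cL]$ decomposes as $\bigoplus_i [\cL_i]$ under $H_1(\Si;\ZZ/2) = \bigoplus_i H_1(\Si_i;\ZZ/2)$, so each $L_i$ is $\ZZ/2$ null-homologous and hence checkerboard colorable. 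Taking a checkerboard colorable diagram of each $L_i$ and placing them disjointly in the plane yields the required disconnected checkerboard colorable diagram $D$, which completes the argument.
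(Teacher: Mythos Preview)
Your argument is correct and follows the same strategy as the paper: pass to a split checkerboard colorable diagram and use the block-diagonal form of $B(D)$ to see that every $(n-1)\times(n-1)$ minor vanishes. You are in fact more careful than the paper, which simply posits such a diagram, in justifying its existence via the $\ZZ/2$-homological characterization of checkerboard colorability; the only minor wrinkle is the crossingless-component case, where it is cleaner to first add an $r1$ kink so that every block has at least one row, rather than to rely on the zero column.
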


\begin{proof}
Suppose $D=D_1 \cup D_2$ is a split checkerboard colorable diagram for $L$. In each row of the coloring matrix, the nonzero elements are either $2,-1,-1$ or $1,-1$. It follows the rows add up to zero. We consider a simple closed curve in the plane which separates $D$ into two parts. It follows that the coloring matrix $B=B(D)$ admits a $2 \times 2$ block decomposition of the form
$$B=\begin{bmatrix}
B_1 & 0 \\ 
0 & B_2
\end{bmatrix},$$
where $B_1$ and $B_2$ are the coloring matrices for $D_1$ and $D_2,$ respectively. Since $\det(B_1) =0=\det(B_2)$, it follows that  the matrix obtained by removing a row and column from $B$ also has determinant zero.      
\end{proof}

Next, we define a mod $p$ labeling for a virtual knot diagram.

\begin{definition}
Let $p$ be a prime number. A link diagram can be labeled mod $p$ if each long arc can be labeled with an integer from $0$ to $p-1$ such that 
\begin{itemize}
\item[(i)] at each crossing the relation $2x-y-z=0  \; \text{(mod $p$)}$ holds, where $x$ is the label on the over-crossing and $y$ and $z$ the other two labels, and
\item[(ii)] at least two labels are distinct.   
\end{itemize}
\end{definition}

If a diagram has a mod $p$ labeling, then multiplying each label by a number $m$ gives a mod $pm$ labeling, so we assume $p$ is always a prime number. 
 
\begin{remark}\label{mod2-coloring}
If $p=2$, then the equation $2x-y-z=0  \; \text{(mod $p$)}$ indicates at each crossing the two under-crossings have the same label, hence all the labels are equal. Therefore, a mod $2$ label for a knot diagram does not exist.
\end{remark}

Given a knot diagram, label each long arc with a variable $x_{i}$. At each crossing we define a relation $2x_{i}-x_{j}-x_{k}=0 \; \text{(mod $p$)}$, if the arc $x_{i}$ crosses over the arcs $x_{j}$ and $x_{k}$. Therefore, a knot can be labeled mod $p$, if this system has a solution mod $p$ such that not all $x_{i}$'s are equal to each other. 

Fix a variable $x_{j}$. Since $x_{i}=1$ for all $i$, is a solution and adding two solutions together forms a new solution, if there was a solution such that not all $x_{i}$'s are equal, then there is a solution with  $x_{j}=0$. Conversely, a nontrivial solution with $x_{j}=0$ results in a labeling of the knot. So we can delete the $j$-th column and look for the nontrivial solutions of the resulting system.

Since we assume the knot diagram is checkerboard colorable, and we know for such a diagram, a linear combination of rows of $B(D)$ is zero, so we can delete the $j$-th row as well. The result is a square matrix, and a nontrivial solution means the determinant should be zero mod $p$. The absolute value of this determinant is $\det(K)$. So, we have the following.

\begin{proposition}\label{coloring}
We can mod $p$ label the knot $K$ if and only if $\det(K)=0 \; \text{(mod $p$)}$. 
\end{proposition}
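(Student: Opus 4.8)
The plan is to recognize the mod $p$ labeling condition as a statement about the kernel of the coloring matrix $B = B(D)$ over the field $\mathbb{F}_p$, and then to reduce that kernel computation to the vanishing of a single $(n-1)\times(n-1)$ minor, which equals $\det(K)$ up to sign by \Cref{prop-alex-det}. First I would note that for a knot diagram one has $k=1$, so $m=n$ and $B$ is an $n\times n$ matrix. A mod $p$ labeling is precisely a vector $\mathbf{x}=(x_1,\ldots,x_n)\in\mathbb{F}_p^{\,n}$ satisfying $B\mathbf{x}\equiv 0\pmod p$ whose coordinates are not all equal, since the coloring relation imposed at a crossing is exactly the corresponding row of the system $B\mathbf{x}=0$. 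Because each row of $B$ sums to zero, we have $B\mathbf{1}=0$, so the constant vectors always form a line of (trivial) solutions; the content of the proposition is that a solution off this line exists if and only if $\det(K)\equiv 0\pmod p$.

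Next I would carry out two reductions, following the discussion preceding the statement. The translation step: because the solution set is an $\mathbb{F}_p$-subspace containing $\mathbf{1}$, replacing a solution $\mathbf{x}$ by $\mathbf{x}-x_j\mathbf{1}$ produces another solution whose $j$-th coordinate is zero. Hence a labeling exists if and only if there is a \emph{nonzero} solution with $x_j=0$ for a fixed chosen index $j$; imposing $x_j=0$ simply deletes the $j$-th column, so this is equivalent to the $n\times(n-1)$ matrix $B_{\hat\jmath}$ having nontrivial kernel. The row-deletion step: here I would invoke \Cref{prop-alex-det}, which supplies a linear dependence $\sum_i \gamma_i(\text{row } i)=0$ in which every $\gamma_i=\pm1$ is a unit. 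Because the coefficients are units, each row of $B$ is a combination of the remaining rows, so deleting any one row $i$ leaves the row space, and therefore the solution space, unchanged. This collapses the system to the square $(n-1)\times(n-1)$ matrix $M=B_{\hat\imath\,\hat\jmath}$ obtained by removing row $i$ and column $j$.

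Finally I would conclude by chaining the equivalences: $B_{\hat\jmath}$ has a nonzero kernel vector if and only if $M$ is singular over $\mathbb{F}_p$, i.e. $\det M\equiv 0\pmod p$; and since $M$ is an $(n-1)\times(n-1)$ minor of $B$, \Cref{prop-alex-det} gives $|\det M|=\det(K)$. I expect the one genuinely load-bearing step to be the row deletion: it is valid only because the dependence coefficients $\gamma_i$ produced by checkerboard colorability are units, which is exactly what lets the $n\times(n-1)$ system be replaced by a square one without altering solvability. The remaining steps are routine linear algebra over $\mathbb{F}_p$, the only point requiring care being that ``not all $x_i$ equal'' matches ``nonzero after killing the $j$-th coordinate,'' which holds because a constant solution is sent to the zero vector under $\mathbf{x}\mapsto\mathbf{x}-x_j\mathbf{1}$, while any nonzero solution with $x_j=0$ automatically has distinct entries.
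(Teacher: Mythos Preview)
Your proposal is correct and follows essentially the same argument as the paper: the paper's proof is the paragraph immediately preceding the proposition, which likewise interprets a mod $p$ labeling as a nonconstant solution of $B\mathbf{x}\equiv 0\pmod p$, deletes a column by translating so that $x_j=0$, deletes a row using the checkerboard-colorability relation among the rows, and then identifies the resulting $(n-1)\times(n-1)$ determinant with $\det(K)$. Your write-up is in fact more careful than the paper's, since you make explicit that the row-deletion step is legitimate precisely because the dependence coefficients $\gamma_i=\pm 1$ from \Cref{prop-alex-det} are units in $\mathbb{F}_p$.
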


\begin{corollary} \label{cor:cc}
For a checkerboard colorable knot $K$, $\det(K)$ is an odd integer.
\end{corollary}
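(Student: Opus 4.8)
The plan is to derive \Cref{cor:cc} as an immediate consequence of \Cref{coloring} applied at the prime $p=2$, together with \Cref{mod2-coloring}. By \Cref{coloring}, a checkerboard colorable knot $K$ admits a mod $p$ labeling if and only if $\det(K)\equiv 0 \pmod p$, and this statement is valid for every prime $p$, in particular for $p=2$. On the other hand, \Cref{mod2-coloring} shows that no knot diagram admits a mod $2$ labeling at all: when $p=2$ the defining relation $2x-y-z\equiv 0 \pmod 2$ reduces to $y\equiv z \pmod 2$, forcing the two under-crossing labels to agree at every crossing, so all arc labels become equal and the requirement that at least two labels be distinct can never be met.

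Combining these two facts, I would argue by contraposition. Since $K$ admits no mod $2$ labeling, \Cref{coloring} with $p=2$ forces $\det(K)\not\equiv 0 \pmod 2$; that is, $\det(K)$ is odd. Because $\det(K)$ is defined as the absolute value of an $(n-1)\times(n-1)$ minor of the integer matrix $B(D)$ (\Cref{prop-alex-det}), it is an integer, and we conclude it is an odd integer, as claimed.

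The only point that requires genuine care is confirming that the equivalence of \Cref{coloring} really holds at $p=2$ and is not tacitly restricted to odd primes. The reduction preceding \Cref{coloring} rewrites the solvability of the labeling system as the vanishing, over $\mathbb{F}_p$, of the determinant of the square matrix obtained from $B(D)$ by deleting one column and one row. Deleting the column is justified by the fact that the all-ones vector is always a solution, while deleting the row relies on checkerboard colorability, which supplies a linear dependence among the rows of $B(D)$ with unit coefficients $\gamma_i=(-1)^{\lambda_i}$. Since these coefficients are units in every $\mathbb{F}_p$, including $\mathbb{F}_2$, no step of that reduction degenerates at $p=2$. I therefore do not expect any substantive obstacle beyond verifying this compatibility; the corollary follows directly once it is in hand.
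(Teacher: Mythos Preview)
Your argument is correct and matches the paper's own proof, which simply says the result follows by combining \Cref{coloring} and \Cref{mod2-coloring}. You have supplied the extra care about why the reduction preceding \Cref{coloring} remains valid at $p=2$, which the paper leaves implicit.
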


\begin{proof}
Combining the \Cref{coloring} and \Cref{mod2-coloring}, the result follows.
\end{proof}

It would be interesting to compare the link determinant defined here with the link determinants defined for checkerboard colorable virtual links in terms of Goeritz matrices \cite{Im-Lee-Lee-2010}.


\setcounter{theorem}{0} 
\setcounter{section}{4} \noindent
\subsection{The Matrix-Tree theorem and an application} \label{sec-4}
In this section, we recall the matrix-tree theorem from \cite{BM} (cf. \cite[Theorem 13.22]{BZH}). Using it, we adapt Crowell's proof \cite{Crowell} to show that the Alexander polynomial of any almost classical alternating link is alternating.

Here, we say a Laurent polynomial is \textit{alternating} if its coefficients alternate in sign. Specifically, a polynomial $f(t)= \sum c_i t^i \in \ZZ[t,t^{-1}]$ is alternating if its coefficients satisfy $(-1)^{i+j} c_i c_j \geq 0.$

The spectacular results concerning the Jones polynomial of classical alternating links are generally not true in the virtual case. For instance, the span of the Jones polynomial is not equal to the crossing number. For example, the knot $K=6.90101$ is alternating and has Jones polynomial $V_{K}(t)=1$.

In \cite{Thistlethwaite-87},
Thistlethwaite proved that the Jones polynomial $V_{L}(t)$ of any
non-split, alternating classical link $L$ is alternating. This result does not extend to virtual links. For example, the virtual knot $K=5.2426$ in \Cref{fig-5-2426} is alternating and has Jones polynomial $V_K(t) = 1/t^2 + 1/t^3 - 1/t^5.$ Since $V_K(t)$ is not alternating, Thistlethwaite's result is not true for virtual links. 

\begin{figure}[h!]
\centering\includegraphics[height=35mm]{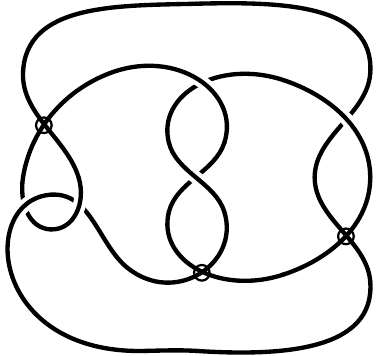}
\caption{\small A virtual knot diagram for $5.2426$.}\label{fig-5-2426}
\end{figure}

Let $L$ be a virtual link. We define the link group $G_{L}$ as in \ref{sec-2}. We use Fox derivatives to define the Jacobian matrix $A$. For virtual knots, the first elementary ideal $\cE_1$ is not necessarily principal. We define the Alexander polynomial $\Delta_{K}(t)$ to be the generator of the smallest principal ideal containing $\cE_{1}$. Since $\ZZ[t,t^{-1}]$ is a gcd domain, it is given by taking the gcd of all the $(n-1)\times(n-1)$ minors of $A$ . If we remove the $i$-th row and $j$-th column of $A$ we denote the corresponding minor by $A_{ij}.$

In \cite{NNST} and \cite{Boden-Nicas-White}, the authors showed for almost classical links, $\cE_{1}$ is principal, and the Alexander polynomial $\Delta_{L}(t)$ is given by taking the determinant of the $(n-1)\times(n-1)$ matrix obtained by removing any row and any column from $A$.

\begin{proposition}
For an almost classical link $L$, the determinant $\det(L)$ is equal to  $|\Delta_{L}(-1)|$.  
\end{proposition}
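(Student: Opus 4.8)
The plan is to exploit the relationship, recorded in the footnote above, that the coloring matrix $B(D)$ is obtained from the Fox Jacobian $A(D)$ by specializing $t=-1$, the equality holding up to a sign in each row. Since $L$ is almost classical, it is represented by an Alexander numberable diagram $D$; such a diagram is in particular mod $2$ Alexander numberable, hence checkerboard colorable, so both $\det(L)$ and $\Delta_L(t)$ can be computed from the single diagram $D$ via its Wirtinger presentation. So I would fix such a $D$ with $n$ classical crossings and let $A=A(D)$ and $B=B(D)$ be the associated $n\times m$ matrices.

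First I would pin down the two invariants as determinants of matching submatrices. By the result of \cite{NNST, Boden-Nicas-White} quoted above, for almost classical $L$ the ideal $\cE_1$ is principal and $\Delta_L(t)=\det A_{ij}$ up to the usual ambiguity of multiplication by $\pm t^k$, where $A_{ij}$ is the submatrix obtained by deleting the $i$-th row and $j$-th column, for any choice of $i,j$. On the other side, \Cref{prop-alex-det} gives $\det(L)=|\det B_{ij}|$ for the submatrix $B_{ij}$ of $B$ obtained by deleting the \emph{same} row $i$ and column $j$. The task then reduces to comparing $\det B_{ij}$ with $(\det A_{ij})(-1)$.

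Next I would carry out the comparison. Writing $A(-1)$ for the integer matrix obtained by setting $t=-1$ in every entry, the footnote supplies signs $\epsilon_1,\dots,\epsilon_n\in\{\pm1\}$ such that the $s$-th row of $B$ equals $\epsilon_s$ times the $s$-th row of $A(-1)$; deleting a common entry from each row preserves this, so every surviving row of $B_{ij}$ is $\epsilon_s$ times the corresponding row of $A_{ij}(-1)$. Multilinearity of the determinant in the rows then gives $\det B_{ij}=\big(\prod_{s\neq i}\epsilon_s\big)\det A_{ij}(-1)$. Since evaluation $t\mapsto-1$ is a ring homomorphism $\ZZ[t,t^{-1}]\to\ZZ$ and the determinant is a polynomial in the entries, specialization commutes with it: $\det A_{ij}(-1)=(\det A_{ij})(-1)=\big(\pm(-1)^k\big)\Delta_L(-1)$. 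Combining these identities yields $\det B_{ij}=\pm\,\Delta_L(-1)$, and taking absolute values gives $\det(L)=|\det B_{ij}|=|\Delta_L(-1)|$, noting that the $\pm t^k$ indeterminacy of $\Delta_L$ becomes $\pm1$ at $t=-1$ and so does not affect $|\Delta_L(-1)|$.

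I expect no serious obstacle; the entire content lies in the bookkeeping of two sign ambiguities. The per-row signs $\epsilon_s$ from the footnote and the $\pm t^k$ indeterminacy of the Alexander polynomial would each appear to obstruct an exact equality, but both collapse to $\pm1$ after setting $t=-1$ and are absorbed by the absolute value. The only point deserving a word of care is the legitimacy of computing both invariants from the \emph{same} diagram and the \emph{same} deleted row and column — precisely what the ``equal up to sign'' statements in \Cref{prop-alex-det} and in the cited principality result license — together with the elementary fact that specialization commutes with the determinant.
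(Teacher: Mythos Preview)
Your proposal is correct and follows essentially the same approach as the paper: both rely on the observation that the coloring matrix $B(D)$ is the specialization of the Fox Jacobian $A(D)$ at $t=-1$ up to row signs, and then compare the appropriate $(n-1)\times(n-1)$ minors. You have simply made explicit the bookkeeping of signs and the use of \Cref{prop-alex-det} and the principality result from \cite{NNST, Boden-Nicas-White}, which the paper's one-line proof leaves implicit.
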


\begin{proof}
If $D$ is a diagram for $L$, 
the coloring matrix $B(D)$ is exactly the matrix obtained from the Fox Jacobian matrix by replacing $t$ with $-1$. \footnote{These matrices are equal up to multiplication by $\pm 1$ in the rows.}
\end{proof}

\begin{remark}
Since any almost classical knot $K$ is checkerboard colorable, \Cref{cor:cc} shows that $\Delta_{K}(-1)$ is an odd number (see \cite{Boden-Gaudreau-Harper-2017}).
\end{remark}

Next, we state the Matrix-Tree theorem, as proved by Bott and Mayberry in \cite{BM}. Tutte had given an earlier proof in \cite{Tutte-1948}. The result goes back to even earlier work of Kirchhoff, to whom this theorem is usually attributed.\footnote{See \cite{Crowell-na} for references to the early papers on the Matrix-Tree theorem.} 

Let $\Gamma$ be a finite oriented graph with vertices $\{c_i \mid 1\leq i\leq  n\}$ and oriented edges $\{u_{ij}^{\delta}\}$, such that $c_{i}$ is the initial point and $c_j$ the terminal point of $u_{ij}^{\delta}$. Notice that $\delta$ enumerates the different edges from $c_i$ to $c_j$. By a \textit{rooted tree} (with root $c_i$) we mean a subgraph of $n-1$ edges such that every point $c_k$ is terminal point of a path with initial point $c_i$. Let $a_{ij}$ denote the number of edges with initial point $c_i$ and terminal point $c_j$.

\begin{theorem}[Matrix-Tree Theorem] \label{rooted} 
Let $\Gamma$ be a finite oriented graph without loops ($a_{ii}=0$). The principal minor $H_{ii}$ of the graph matrix
$$H(\Gamma)=\left[\begin{matrix}
(\sum_{k\neq 1}a_{k1})&-a_{12}&-a_{13}&\cdots&-a_{1n}\\
-a_{21}&(\sum_{k\neq 2}a_{k2})&-a_{23}&\cdots&-a_{2n}\\
\vdots&\vdots&\vdots&&\vdots\\
-a_{n1}&-a_{n2}&-a_{n3}&\cdots&(\sum_{k\neq n}a_{kn})
\end{matrix}\right],$$
is equal to the number of rooted trees with root $c_{i}$.
\end{theorem}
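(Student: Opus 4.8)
The plan is to prove the theorem by factoring the graph matrix $H(\Gamma)$ as a product of two incidence-type matrices and then applying the Cauchy--Binet formula, so that each term in the resulting sum is attached to a set of $n-1$ edges and is nonzero precisely for the rooted trees with root $c_i$. (An alternative is induction on the number of edges via a deletion/contraction recursion matching the multilinearity of the determinant, but the incidence-matrix approach is cleaner to organize.)

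First I would introduce two matrices whose rows are indexed by the vertices $c_1,\dots,c_n$ and whose columns are indexed by the edges of $\Gamma$. Let $S$ be the signed incidence matrix, with $S_{v,e}=+1$ when $v$ is the terminal point of $e$, with $S_{v,e}=-1$ when $v$ is the initial point of $e$, and $S_{v,e}=0$ otherwise; and let $T$ be the head-incidence matrix, with $T_{v,e}=1$ when $v$ is the terminal point of $e$ and $0$ otherwise. A direct computation of $S\,T^{\mathsf{T}}$, using the absence of loops, shows that its $(v,v)$ entry is the in-degree $\sum_{k\neq v}a_{kv}$ and that its $(u,v)$ entry is $-a_{uv}$ for $u\neq v$; that is, $H(\Gamma)=S\,T^{\mathsf{T}}$.

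The principal minor $H_{ii}$ is the determinant of the matrix obtained from $H(\Gamma)$ by deleting row and column $i$. Writing $S'$ and $T'$ for the matrices $S$ and $T$ with row $i$ removed, we have $H_{ii}=\det\!\big(S'(T')^{\mathsf{T}}\big)$, and Cauchy--Binet gives
\begin{equation*}
H_{ii}=\sum_{F}\det(S'_F)\,\det(T'_F),
\end{equation*}
the sum ranging over all subsets $F$ of $n-1$ edges, where $S'_F$ and $T'_F$ denote the square submatrices on the columns indexed by $F$. The next step is to identify which $F$ contribute. The matrix $T'_F$ has exactly one nonzero entry in each column $e\in F$, located in the row of the terminal point of $e$ (and that column is zero if the terminal point is $c_i$); hence $\det(T'_F)=\pm1$ precisely when the terminal points of the edges of $F$ are exactly the $n-1$ vertices different from $c_i$, and $\det(T'_F)=0$ otherwise. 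Equivalently, the surviving $F$ are those in which $c_i$ is the terminal point of no edge and every other vertex is the terminal point of exactly one edge of $F$. For such $F$, the factor $\det(S'_F)$ is a maximal minor of the reduced incidence matrix of $\Gamma$, which by the classical characterization underlying the undirected matrix--tree theorem equals $\pm1$ when the edges of $F$ span a tree of the underlying graph and $0$ when they contain a cycle. Combined with the in-degree condition, the surviving $F$ with both factors nonzero are exactly the rooted trees with root $c_i$.

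The main obstacle is the sign: one must verify that $\det(S'_F)\det(T'_F)=+1$ for every rooted tree $F$, so that the terms count the trees rather than cancel. I would settle this by ordering the non-root vertices of $F$ along the paths emanating from $c_i$, so that each vertex is listed after the initial point of its unique incoming edge, and by listing the columns of $F$ in the matching order. With respect to this ordering, $T'_F$ is the identity matrix, and $S'_F$ is triangular with $+1$'s on the diagonal and the $-1$ entries off the diagonal, so each determinant equals $1$ and their product is $1$; since a common permutation of rows and columns multiplies the product of the two determinants by a square of a sign and hence leaves it unchanged, this value $+1$ is independent of all the choices made. Summing the contribution $+1$ over all rooted trees with root $c_i$ then yields the asserted count.
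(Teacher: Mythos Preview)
The paper does not actually prove the Matrix--Tree Theorem; it merely states it with a citation to Bott--Mayberry and earlier sources, so there is no ``paper's own proof'' to compare against. Your argument via the factorization $H(\Gamma)=S\,T^{\mathsf T}$ and Cauchy--Binet is a standard and correct proof of the result: the factorization is checked entrywise, the identification of the surviving $(n-1)$-edge sets $F$ as arborescences rooted at $c_i$ is right (the in-degree condition from $T'_F$ together with acyclicity from $S'_F$ forces $F$ to be a spanning tree oriented away from $c_i$), and your triangularization argument for the sign is valid since simultaneous row and column permutations multiply $\det(S'_F)\det(T'_F)$ by the square of a sign. One small point worth making explicit: when $F$ satisfies the in-degree condition but its underlying graph is disconnected, the component not containing $c_i$ must carry an undirected cycle (it has as many edges as vertices), so $\det(S'_F)=0$ there as well; you implicitly use this when reducing to the spanning-tree case.
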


\begin{corollary}\label{bottm}
Let $\Gamma$ be a finite oriented loopless graph with a \textit{valuation} $f \co \{u_{ij}^{\delta}\}\rightarrow\{-1,1\}$ on edges. Then the principal minor $H_{ii}$ of the matrix $H=[b_{ij}]$, where $$b_{ij}=\begin{cases}\sum_{\delta}f(u_{ij}^{\delta}),\ \ \ i\neq j,\\
-\sum_{k\neq i}b_{ki}, \ \ i=j,\end{cases}$$
satisfies the following equation:
$$H_{ii}=\sum f(\Tr(i)),$$
where the sum is to be taken over all $c_{i}$-rooted trees $\Tr(i)$, and where
$$f(\Tr(i))=\prod_{u_{kj}^{\delta}\in \Tr(i)}f(u_{kj}^{\delta}).$$
\end{corollary}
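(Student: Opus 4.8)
The plan is to deduce this signed identity from the unsigned Matrix-Tree Theorem (\Cref{rooted}) by a multilinearity argument, treating the edge valuations as formal indeterminates. Concretely, I would first replace each value $f(u_{ij}^{\delta})$ by an independent variable $w_{ij}^{\delta}$, form the matrix $H(w)$ with off-diagonal entries $b_{ij}=\sum_{\delta} w_{ij}^{\delta}$ and diagonal entries $b_{ii}=-\sum_{k\neq i} b_{ki}$, and regard the principal minor $H_{ii}(w)=\det H^{(i)}(w)$ (where $H^{(i)}$ deletes row $i$ and column $i$) as a polynomial in the $w_{ij}^{\delta}$. The goal then becomes the polynomial identity
$$H_{ii}(w)=\sum_{\Tr(i)}\ \prod_{u_{kj}^{\delta}\in\Tr(i)} w_{kj}^{\delta},$$
the sum running over all $c_i$-rooted trees; specializing $w_{ij}^{\delta}=f(u_{ij}^{\delta})\in\{-1,1\}$ afterwards yields the corollary.

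The first key step is to verify that each variable $w_{ij}^{\delta}$ occurs to degree at most one in $H_{ii}(w)$, so that both sides are multilinear. The edge $u_{ij}^{\delta}$ affects only the off-diagonal entry $b_{ij}$ and, through the column-sum-zero convention, the diagonal entry $b_{jj}$; both of these lie in column $j$ of $H(w)$. If $j=i$ this column is deleted and the variable disappears entirely, consistent with the fact that no edge of a $c_i$-rooted tree points into the root, while if $j\neq i$ the variable appears only within column $j$ of $H^{(i)}$. Since the determinant is linear in each column, $H_{ii}(w)$ is linear in each $w_{ij}^{\delta}$ separately, hence multilinear, and the right-hand side is visibly multilinear as well.

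The second key step is to match these two multilinear polynomials on the Boolean cube $\{0,1\}^{N}$, where $N$ is the total number of edges. Given an assignment $w_{ij}^{\delta}\in\{0,1\}$, let $\Gamma'\subseteq\Gamma$ be the subgraph consisting of the edges with $w=1$; then $b_{ij}=\sum_{\delta} w_{ij}^{\delta}$ equals the number $a_{ij}$ of edges of $\Gamma'$ from $c_i$ to $c_j$, so $H(w)$ is precisely the graph matrix $H(\Gamma')$ appearing in \Cref{rooted}. By that theorem, $H_{ii}(w)$ counts the $c_i$-rooted trees contained in $\Gamma'$, whereas the right-hand sum selects exactly those rooted trees all of whose edges satisfy $w=1$, i.e.\ lie in $\Gamma'$, each contributing $1$. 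Thus both sides agree at every point of $\{0,1\}^{N}$, and since a multilinear polynomial over $\ZZ$ is determined by its values on the Boolean cube, the desired polynomial identity follows; substituting $w=f$ finishes the argument.

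I expect the main obstacle to be the bookkeeping in the first step: confirming that the implicitly defined diagonal entries do not introduce higher-degree occurrences of any single variable, and checking that the loopless hypothesis $a_{ii}=0$ is exactly what makes the edge-into-root terms vanish cleanly. One must also track parallel edges $u_{ij}^{\delta}$ individually, both in the matrix entries (as a sum over $\delta$) and in the rooted-tree sum (which selects a specific $\delta$), so that the Boolean specialization faithfully reproduces the multiplicities counted in \Cref{rooted}.
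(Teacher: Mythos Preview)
The paper states this corollary without proof, so there is no argument of its own to compare against. Your multilinearity-plus-Boolean-cube strategy is a standard and sound way to pass from the unweighted tree count of \Cref{rooted} to a weighted identity, and your first key step---that each variable $w_{ij}^{\delta}$ appears only within column~$j$ of $H$ and hence to degree at most one in the principal minor---is correct.

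The gap is in your second step. When $w\in\{0,1\}$ you correctly obtain $b_{ij}=a'_{ij}$, the number of edges of $\Gamma'$ from $c_i$ to $c_j$; but the graph matrix of \Cref{rooted} has off-diagonal entries $-a'_{ij}$ and diagonal entries $+\sum_{k\neq j}a'_{kj}$, whereas the matrix $H$ of the corollary has off-diagonal entries $+b_{ij}$ and diagonal entries $-\sum_{k\neq j}b_{kj}$. Thus $H(w)=-H(\Gamma')$, not $H(\Gamma')$, and hence $H_{ii}(w)=(-1)^{n-1}$ times the number of $c_i$-rooted trees in $\Gamma'$. Carried through carefully, your argument therefore proves $H_{ii}=(-1)^{n-1}\sum f(\Tr(i))$ rather than the identity exactly as stated; a two-vertex graph with a single edge $u_{12}$ already exhibits the discrepancy ($H_{11}=-w$ versus $\sum f(\Tr(1))=w$). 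This global sign is harmless for the paper's application, since the Alexander polynomial is only determined up to a unit, but you should either absorb the sign into the statement you are proving or flag that the identity holds only up to $(-1)^{n-1}$.
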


For a virtual link diagram, there are (at least) two ways one can associate a $4$-valent graph. One way is to consider the diagram $D$ itself. It has vertices for the classical and virtual crossings and edges running from  one classical or virtual crossing to the next. This graph is planar. The other way to associate a graph is to consider vertices only for classical crossings. The key difference is that in general, this graph is not planar. For an alternating diagram $D$, we describe this graph and an orientation on it as follows:

Let $D$ have classical crossings $c_{1},\ldots,c_{n}$. The vertices of $\Gamma$ are $c_{1},\ldots,c_{n}$. At each vertex consider two out-going edges corresponding to the over-crossing arc, and two in-coming edges for the under-crossing arcs (see \Cref{source-sink}). This is called the \textit{source-sink orientation} or the \textit{alternate orientation}. This orientation is possible because  $D$ is alternating, and an out-going edge at the vertex $c_{i}$, should be an in-coming edge for the adjacent vertex.

\begin{remark}\label{rem-source-sink}
In general, any checkerboard colorable diagram $D$ admits a source-sink orientation. In fact, a diagram is checkerboard colorable if and only if it admits a source-sink orientation (see \cite[Proposition 6]{Kamada-skein}). 
\end{remark}

 \begin{figure}[h!]
\centering\includegraphics[scale=0.95]{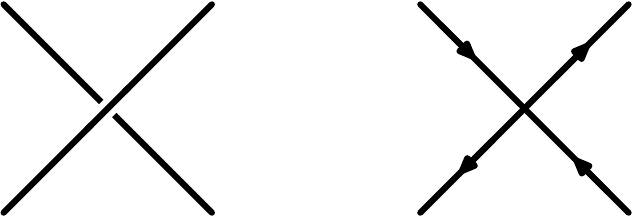}
\caption{\small The source-sink orientation.}\label{source-sink}
\end{figure}

\begin{theorem}\label{almost-alt}
If $L$ is a non-split, almost classical alternating link, then its Alexander polynomial $\Delta_{L}(t)$ is alternating.
\end{theorem}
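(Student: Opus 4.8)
The plan is to adapt Crowell's classical argument via the Matrix-Tree Theorem (Corollary~\ref{bottm}). The key observation is that for an almost classical link $L$, the Alexander polynomial is computed as a single minor of the Fox Jacobian matrix $A$ — namely $\Delta_L(t) = \pm A_{ij}$ for any choice of removed row $i$ and column $j$ — and that $A$ is, up to a controlled change of variable, almost exactly the graph matrix $H(\Gamma)$ from the Matrix-Tree Theorem applied to the source-sink oriented graph $\Gamma$ associated to an alternating diagram $D$. So first I would fix a reduced alternating almost classical diagram $D$ with classical crossings $c_1,\ldots,c_n$, form the non-planar $4$-valent graph $\Gamma$ on the vertices $c_1,\ldots,c_n$ with the source-sink (alternate) orientation guaranteed by \Cref{rem-source-sink}, and identify the entries of the Fox Jacobian with the entries $b_{ij}$ in Corollary~\ref{bottm}.

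\emph{Setting up the valuation.} The heart of the argument is to choose the valuation $f\co\{u_{ij}^\delta\}\to\{-1,1\}$ and a monomial weighting so that the principal minor $H_{ii}$ produced by Corollary~\ref{bottm} literally equals $\Delta_L(t)$ up to a unit $\pm t^k$. At a crossing, Fox differentiation of the Wirtinger relation $r_i = x_\ell x_j^{-1} x_k^{-1} x_j$ and then abelianizing $x_s \mapsto t$ produces entries that are $\pm 1, \pm t$ on the off-diagonal and a diagonal entry forced by the row/column sums. The point of the source-sink orientation is that the two outgoing edges at each vertex carry the over-crossing generator and the two incoming edges carry the under-crossing generators; because $D$ is alternating, as one traverses any long arc the over/under pattern is rigid, so the signs of the relevant edge-contributions are \emph{consistent} around the graph. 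I would assign $f(u_{ij}^\delta) = +1$ or $-1$ according to this fixed over/under sign, and track the variable $t$ via the edge weights so that each rooted tree $\Tr(i)$ contributes a monomial $\pm t^{e}$ where $e$ is the number of tree edges of one designated type.

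\emph{Why the result is alternating.} Once $\Delta_L(t) = H_{ii} = \sum_{\Tr(i)} f(\Tr(i))$ is written as a signed sum over $c_i$-rooted spanning trees, each summand is $\pm t^{e(\Tr(i))}$, and the sign of each summand must be correlated with the parity of its degree $e(\Tr(i))$. Concretely, I would show that for every rooted tree, the product of edge-signs $f(\Tr(i)) = \prod f(u_{kj}^\delta)$ satisfies $f(\Tr(i)) = (-1)^{e(\Tr(i))}\,(\text{fixed global sign})$, so that the coefficient $c_e$ of $t^e$ is a sum of terms all of the same sign $(-1)^e$ up to a global factor. This immediately yields $(-1)^{i+j}c_i c_j \ge 0$, i.e. $\Delta_L(t)$ is alternating. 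Non-splitness enters to guarantee that $\Gamma$ is connected so that $c_i$-rooted spanning trees exist and $H_{ii}\ne 0$ (via \Cref{split det} the determinant, hence the relevant minor, would vanish in the split case, which is the degenerate situation one must exclude).

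\emph{Main obstacle.} The delicate part is the sign bookkeeping: verifying that a single valuation $f$ on the oriented edges of the non-planar graph $\Gamma$ simultaneously reproduces the correct $\pm t$ entries of the Fox Jacobian at \emph{every} crossing, and that the resulting tree-sign $f(\Tr(i))$ depends on the tree only through the degree $e(\Tr(i))$ modulo a global constant. This is exactly where alternation is used, and it is the step Crowell's original proof handles in the classical planar case — my task is to check that the source-sink orientation makes the same coherence argument go through verbatim on the possibly non-planar graph $\Gamma$, which it should, since the argument is local at each vertex and the alternate orientation is precisely what encodes the alternating condition combinatorially.
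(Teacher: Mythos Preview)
Your proposal is correct and follows essentially the same route as the paper: build the source-sink oriented graph $\Gamma$ from an alternating diagram, use the bijection between crossings and over-crossing arcs (so $\nu(i)=i$) to identify the Fox Jacobian $A$ with the transpose of the matrix $H$ in Corollary~\ref{bottm}, and read off $\Delta_L(t)=H_{ii}$ as a sum over rooted spanning trees. The only notable difference is packaging: the paper assigns edge weights $1$ and $-t$ directly (a harmless extension of Corollary~\ref{bottm} to values in $\ZZ[t,t^{-1}]$), so each tree contributes $(-t)^e\cdot 1^{n-1-e}=(-1)^e t^e$ and the sign--degree correlation you isolate as the ``main obstacle'' is automatic; your version separates the $\pm 1$ valuation from the $t$-weight and then argues the correlation, which is equivalent but one step longer. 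Also, you need not assume the diagram is reduced---the argument goes through for any non-split alternating diagram.
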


\begin{proof}
For the unknot the result is obvious.
Assume $D$ has $n\geq 1$ classical crossings. Orient $D$ and enumerate the crossings by $c_1,\ldots,c_n$. Label the long arcs by $g_{1},\ldots,g_{n}$. 
At the crossing $c_i$, label the over-crossing arc $g_{\nu(i)}$ 
and the under-crossing arcs $g_{\lambda(i)}$ and $g_{\rho(i)}$ as in \Cref{crossing-two}.
Define the relation $r_{i}=g_{\lambda(i)}g_{\nu(i)}^{-1} g_{\rho(i)}^{-1}g_{\nu(i)}$. 

\begin{figure}[h!]
\centering\includegraphics[scale=0.90]{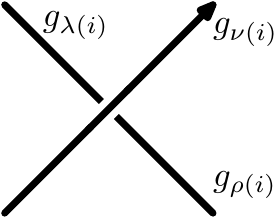}
\caption{\small Arc labels at the crossing $c_i$.} \label{crossing-two}
\end{figure}

Now consider the graph $\Gamma$ associated with $D$, with the source-sink orientation on it. Label the edges by $u_{ij}^{\delta}$. Define the valuation $f$ as follows. At the crossing $c_{j}$, if $u_{ij}^{\delta}$ corresponds to $g_{\lambda(j)}$, then $f(u_{ij}^{\delta})=1$, and if it corresponds to $g_{\rho(j)}$, then $f(u_{ij}^{\delta})=-t$. 

Define the matrix $H$ as in the \Cref{bottm}. Notice that $D$ is alternating and there is a one-to-one correspondence between the classical crossings of $D$ and the set of over-crossing arcs. Therefore, we can choose to label over-crossing arcs, such that $\nu(i)=i$. The matrix $H$ is the transpose of the Jacobian matrix $A$. The Alexander polynomial $\Delta_{L}(t)=A_{ii}=H_{ii}$. By \Cref{bottm}, 
$$H_{ii}=\sum \prod_{u_{kj}^{\delta}\in \Tr(i)}f(u_{kj}^{\delta}).$$
Since $f(u_{kj}^{\delta})=1,\text{or}\ -t$, the product $\prod_{u_{kj}^{\delta}\in \Tr(i)}f(u_{kj}^{\delta})$ is of the form $(-1)^lt^l$ and $H_{ii}$ is an alternating polynomial. Therefore, $\Delta_{L}(t)$ is alternating.   
\end{proof}

\begin{example}
Up to $6$ classical crossings, the almost classical knots in \Cref{table1} do not have alternating Alexander polynomials. Therefore, by \Cref{almost-alt} they do not admit alternating virtual knot diagrams. 
\hfill $\Diamond$ 
\end{example}

\renewcommand{\arraystretch}{1.10}
\begin{table}[h!]
\begin{center}
{\rowcolors{1}{white!80!white!50}{lightgray!70!lightgray!40}
\begin{tabular}{|c|l|} \hline
 $K$ & $\Delta_{K}(t)$  \\ \hline \hline 
    5.2331  &  $t^{2}-1+t^{-1}$      \\ \hline
    6.85091 &  $1+t^{-1}-t^{-2}$     \\ \hline
    6.85774 &  $t-1+t^{-2}$          \\ \hline
    6.87548 &  $-t^{2}+2t+1-t^{-1}$   \\ \hline
    6.87875 &  $t+1-2t^{-1}+t^{-2}$    \\ \hline
    6.89156 &  $2t-1-t^{-1}+t^{-2}$    \\ \hline
    6.89812 &  $t^{2}-2+2t^{-1}$       \\ \hline
    6.90099 &  $t-t^{-1}+t^{-3}$     \\ \hline                    
\end{tabular}
}
\renewcommand{\arraystretch}{1} 
\end{center}
\caption{\small Almost classical knots with non-alternating Alexander polynomials.}\label{table1}
\end{table}

\begin{remark}
Any integral polynomial $\Delta(t)$ of degree $2n$ satisfying $\Delta(1)=1$ and $\Delta(t)= t^{2n} \Delta(t^{-1})$ is the Alexander polynomial for some classical knot, see \cite{Seifert-1935}. In \cite{Fox-1962-b}, Fox asked for a characterization of Alexander polynomials of alternating knots.  If $K$ is an alternating knot, then $\Delta_K(t) = \sum_{j=0}^{2n}(-1)^j a_j t^j$. Fox conjectured that the  Alexander polynomial of any alternating knot satisfies the trapezoidal inequalities: 
$$a_0 < a_1 < \cdots < a_{n-k} = \cdots = a_{n+k} > \cdots > a_{2n}.$$ 
Fox verified this conjecture for alternating knots up to 11 crossings, and it has been verified in many other cases \cite{Jong-2009, Hirasawa-Murasugi, Alrefai-Chbili, Chen-2021}. Despite this progress, Fox's trapezoidal conjecture remains an intriguing open problem. 

Any integral polynomial $\Delta(t)$ satisfying $\Delta(1)=1$ is the Alexander polynomial for some almost classical knot $K$, see \cite{Boden-Chrisman-Gaudreau-2020}. Is there a way to characterize the Alexander polynomials of alternating almost classical knots? Do they satisfy Fox's trapezoidal inequalities? This has been verified for almost classical knots up to six classical crossings. 
\end{remark}

\setcounter{theorem}{0} 
\setcounter{section}{5} \noindent
\subsection{Split alternating virtual links are visibly split}\label{sec-5}

A classical result of Bankwitz \cite{Bankwitz} implies that $\det(L)$ is nontrivial for non-split alternating links. We extend this result to virtual alternating links and apply it to show that an alternating virtual link $L$ is split if and only if it is visibly split.

The weak form of the first Tait Conjecture, namely that every knot having a reduced alternating diagram with at least one crossing is nontrivial, was first proved by Bankwitz \cite{Bankwitz} in 1930; and since then, Menasco and Thistlethwaite \cite{Menasco-This} and Andersson \cite{Andersson} published simpler proofs. Here we outline the proof by Balister et al. \cite{det} and generalize it to alternating virtual links. This result was first proved  for alternating virtual knots  by Cheng \cite[Proposition 3.3]{Cheng-15}.

Consider the graph $\Gamma$ with vertices $\{c_1,\ldots,c_n\}$ as before.

\begin{definition}
The \textit{outdegree} of the vertex $c_i$, denoted $d^{+}(c_i)$, is the number of edges of $\Gamma$ with initial point $c_i$. The \textit{indegree} of the vertex $c_i$, denoted $d^{-}(c_i)$, is the number of edges of $\Gamma$ with terminal point $c_i$. Therefore,
$$d^{+}(c_i)=\sum_{j=1}^n a_{ij}\ \ ,\ \ d^{-}(c_i)=\sum_{j=1}^n a_{ji}.$$ 
\end{definition}

\begin{definition}
A \textit{walk} in a graph is an alternating sequence of vertices and edges, starting with a vertex $c_i$ and ending with a vertex $c_j$. A walk is called a \textit{trail} if all the edges in that walk are distinct. A \textit{circuit} is a trail which starts and ends at a vertex $c_i$. An \textit{Eulerian circuit} is a circuit which contains all the edges of $\Gamma$. A graph $\Gamma$ is called \textit{Eulerian} if it has an Eulerian circuit.     
\end{definition}
 
An Eulerian graph is necessarily connected and has  $d^{+}(c_i)=d^{-}(c_i)$ for every vertex. Let $t_{i}(\Gamma)$ be the number of rooted trees with root $c_i$, then the BEST Theorem is as follows (see \cite{BEST} and \cite[Theorem 13]{Modern}).

\begin{theorem}\label{best}
Let $s(\Gamma)$ be the number of Eulerian circuits of $\Gamma$, then
$$s(\Gamma)=t_{i}(\Gamma)\prod_{j=1}^n(d^{+}(c_j)-1)!$$
\end{theorem}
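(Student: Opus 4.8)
The plan is to prove \Cref{best} by the bijective argument underlying the statement (see \cite{BEST}), which matches Eulerian circuits with spanning arborescences equipped with local orderings of the out-edges. Throughout I use that an Eulerian $\Gamma$ is connected and balanced, $d^{+}(c_j)=d^{-}(c_j)$ for every $j$, as already noted. I count $s(\Gamma)$ as the number of Eulerian circuits up to cyclic rotation of the starting point (equivalently, with a fixed initial edge); counting instead the closed Eulerian trails based at a chosen vertex $c_i$ with a free initial edge multiplies the total by $d^{+}(c_i)$, and this bookkeeping will resurface at the end. A preliminary observation reconciles the two orientations of a spanning tree: the bijection below produces spanning \emph{in-trees} directed toward $c_i$, whereas $t_i(\Gamma)$ counts out-trees. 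These are equinumerous, because the reverse graph $\Gamma^{\mathrm{rev}}$ satisfies $H(\Gamma^{\mathrm{rev}})=H(\Gamma)^{T}$ when $\Gamma$ is balanced, its out-trees from $c_i$ are the reversals of the in-trees of $\Gamma$ toward $c_i$, and the $(i,i)$ principal minors of $H(\Gamma)$ and $H(\Gamma)^{T}$ agree; by \Cref{rooted} the number of in-trees of $\Gamma$ toward $c_i$ thus equals $t_i(\Gamma)$.

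The heart of the argument is the last-exit construction. Fix the root $c_i$ and let $W$ be a closed Eulerian trail based at $c_i$. For each vertex $v\neq c_i$, record the out-edge of $v$ traversed last along $W$; call this set of $n-1$ edges $T(W)$. The key lemma is that $T(W)$ is a spanning in-tree toward $c_i$. Since $T(W)$ has exactly one out-edge at each $v\neq c_i$ and none at $c_i$, it suffices to exclude a directed cycle. If $T(W)$ contained one, let $e$ be the cycle edge traversed latest in $W$, say $e$ leaves $v$ and enters $v'$, with $v'\neq c_i$ on the cycle. Because $e$ is the last exit from $v$, the trail never leaves $v$ after $e$; but arriving at $v'\neq c_i$ forces a later departure from $v'$, contradicting that the last exit of $v'$, namely its cycle edge, was traversed before $e$. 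Hence $T(W)$ is acyclic, so an in-tree.

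For the converse I reconstruct a trail from the data of an in-tree $T$ toward $c_i$ together with, at each vertex $v$, a linear ordering of its out-edges in which the edge of $T$ is last whenever $v\neq c_i$. The greedy rule is: start at $c_i$ and always leave the current vertex by its first as-yet-unused out-edge in the prescribed order. One verifies this uses every edge exactly once and that its last-exit tree is $T$, so the two maps are mutually inverse. Counting orderings gives $d^{+}(c_i)!$ choices at the root and $(d^{+}(v)-1)!$ at each other vertex, so the number of Eulerian trails based at $c_i$ is $t_i(\Gamma)\,d^{+}(c_i)!\prod_{v\neq c_i}(d^{+}(v)-1)!$. Dividing by $d^{+}(c_i)$ to pass from based trails to cyclic circuits yields $s(\Gamma)=t_i(\Gamma)\prod_{j=1}^{n}(d^{+}(c_j)-1)!$, and since the left-hand side is manifestly independent of $i$, so is $t_i(\Gamma)$.

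The hard part will be the verification inside the converse that the greedy reconstruction never stalls before exhausting the edges. Since $\Gamma$ is balanced, a greedy trail from $c_i$ can only terminate back at $c_i$, and the rule that each tree-edge is used last should force every vertex to be fully explored before the walk commits the tree-edge carrying it one step nearer the root; making this precise is the genuine crux and is essentially dual to the acyclicity lemma above. By contrast, the ordering count, the factor $d^{+}(c_i)$ relating based trails to cyclic circuits, and the in-tree/out-tree identity via \Cref{rooted} are routine.
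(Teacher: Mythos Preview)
The paper does not actually prove \Cref{best}; it is quoted as a known result with references to \cite{BEST} and \cite[Theorem 13]{Modern}, and is then applied in the two-in two-out case. Your proposal therefore goes beyond the paper, supplying the standard bijective proof due to van Aardenne--Ehrenfest and de Bruijn.

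Your argument is correct in outline and in its details. The last-exit tree construction and the acyclicity lemma are handled properly; in particular, your observation that no cycle in $T(W)$ can pass through $c_i$ (since $c_i$ contributes no out-edge to $T(W)$) justifies the assumption $v'\neq c_i$. The reconciliation of in-trees with out-trees via $H(\Gamma^{\mathrm{rev}})=H(\Gamma)^{T}$ for balanced $\Gamma$, together with \Cref{rooted}, is exactly right and necessary given the paper's convention that a rooted tree at $c_i$ is an out-arborescence. The bookkeeping for based trails versus cyclic circuits is also correct.

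The one place you flag as the crux---that the greedy reconstruction never stalls before exhausting the edges---is indeed the point requiring care, but it is routine to complete: the trail can only terminate at $c_i$ by balance, and if upon termination some $v\neq c_i$ retains an unused out-edge, then its tree edge (being last in the local order) is unused, so its tree-successor $v'$ has an unused in-edge and hence, by balance, an unused out-edge; iterating along the tree-path to $c_i$ produces an unused in-edge at $c_i$, contradicting that all out-edges (hence all in-edges) of $c_i$ are exhausted. You have correctly identified this as dual to the acyclicity argument.
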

In particular, if $\Gamma$ is a two-in two-out oriented graph, i.e., $d^{+}(c_i)=d^{-}(c_i)=2$ for every $i$, then by Theorems \ref{rooted} and \ref{best}, 
$$s(\Gamma)= t_i(\Gamma)=H_{ii},\ \ \text{for every}\ i.$$

A vertex $c$ of a graph $\Gamma$ is an \textit{articulation vertex} if $\Gamma$ is the union of two nontrivial graphs with only the vertex $c$ in common. In \cite{det} Balister et al. proved the following result: 

\begin{figure}[h!]
\centering\includegraphics[scale=0.90]{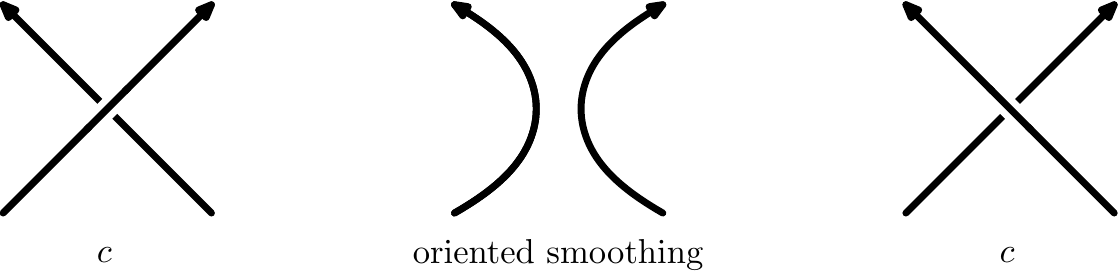}
\caption{\small The oriented smoothing at $c$.} \label{crossing-smoothing}
\end{figure}

\begin{theorem}\label{eulerian}
Let $\Gamma$ be a connected two-in two-out oriented graph with $n\geq 2$ vertices and with no articulation vertex. Then $s(\Gamma)\geq n$.
\end{theorem}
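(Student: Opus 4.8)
The plan is to prove the bound $s(\Gamma)\ge n$ by induction on $n$, using the oriented smoothing of \Cref{crossing-smoothing} as the reduction step. First recall that, since $\Gamma$ is a connected two-in two-out graph, it is Eulerian, and Theorems \ref{rooted} and \ref{best} identify $s(\Gamma)=t_i(\Gamma)=H_{ii}$ for every $i$; in particular $s(\Gamma)\ge 1$. The key feature of a two-in two-out vertex $c$ is that it admits exactly two orientation-respecting smoothings: suppressing $c$ and joining its two in-coming edges to its two out-going edges can be done in precisely two ways, producing two-in two-out graphs $\Gamma_1$ and $\Gamma_2$, each with $n-1$ vertices.

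Second, I would establish the additivity relation
$$s(\Gamma)=s(\Gamma_1)+s(\Gamma_2).$$
An Euler circuit of $\Gamma$ passes through $c$ exactly twice, and at these two passages it realizes one of the two transition patterns at $c$; suppressing $c$ in accordance with that pattern turns the circuit into an Euler circuit of the corresponding $\Gamma_i$, and this correspondence is reversible. A disconnected $\Gamma_i$ simply contributes $0$, since it has no Euler circuit.

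Third comes the point where the hypothesis on articulation vertices enters. Since $\Gamma_i$ is obtained from $\Gamma-c$ by adjoining edges, $\Gamma_i$ can fail to be connected only if $\Gamma-c$ is disconnected, that is, only if $c$ is an articulation vertex of $\Gamma$. As $\Gamma$ has none, both $\Gamma_1$ and $\Gamma_2$ are connected two-in two-out graphs on $n-1$ vertices, so $s(\Gamma_1),s(\Gamma_2)\ge 1$. With the base case $n=2$ (the graph with a doubled edge in each direction, for which $s=2=n$), this already gives $s(\Gamma)\ge 2$. If, moreover, one of the smoothings—say $\Gamma_1$—has no articulation vertex, the inductive hypothesis yields $s(\Gamma_1)\ge n-1$, whence $s(\Gamma)\ge (n-1)+1=n$, as desired.

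The main obstacle is the remaining case, in which \emph{both} smoothings acquire an articulation vertex, so that the inductive hypothesis cannot be applied to either one directly; one cannot simply weaken the hypothesis, since the bound $s\ge(\#\text{vertices})$ genuinely fails once articulation vertices are present. I would handle this either by showing that in a two-in two-out graph with no articulation vertex one can always choose the smoothing vertex $c$ so that at least one $\Gamma_i$ remains free of articulation vertices, or else by an auxiliary block-decomposition estimate: every articulation vertex of a two-in two-out graph is $1$-in $1$-out in each of the exactly two blocks meeting it, so $s$ is multiplicative over blocks, $s(G)=\prod_j s(B_j)$, and bounding each factor by applying the no-articulation case to the suppression of $B_j$ and combining with the elementary inequality $\prod_j(1+k_j)\ge 1+\sum_j k_j$ recovers $s(\Gamma)\ge n$. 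Pinning down this last step so that the block bounds combine to give exactly $n$, rather than $n-1$, is the delicate part of the argument.
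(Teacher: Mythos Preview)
The paper does not prove \Cref{eulerian}; it quotes the statement from Balister, Bollob\'as, Riordan, and Scott \cite{det} and uses it as a black box. So there is no in-paper argument to compare against, and your proposal should be judged on its own.

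Your framework is sound: the additivity $s(\Gamma)=s(\Gamma_1)+s(\Gamma_2)$ is correct (Euler circuits of $\Gamma$ split according to the transition used at $c$, and the correspondence with Euler circuits of the two smoothings is bijective), and your observation that both $\Gamma_i$ are connected whenever $c$ is not an articulation vertex is also correct. The base case and the ``good'' inductive case are fine.

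The genuine gap is exactly the one you flag: the case in which \emph{both} smoothings acquire an articulation vertex. Neither of the two remedies you outline is carried through, and the second one does not work as written. With your block decomposition of $\Gamma_1$ into $m$ blocks along a path of cut vertices, one has $\sum_j |V(B_j')| = n-m$ after suppressing the $1$-in $1$-out vertices, so the inequality $\prod_j(1+k_j)\ge 1+\sum_j k_j$ together with $s(B_j')\ge |V(B_j')|$ only yields $s(\Gamma_1)\ge 1+\sum_j(|V(B_j')|-1)=n-2m+1$, which is far short of $n-1$ once $m\ge 2$. Nor is it clear that every $B_j'$ is large enough (or free of articulation vertices) for the inductive hypothesis to apply. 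The first remedy --- choosing $c$ so that at least one smoothing is $2$-connected --- may be true, but you have not argued it, and it is not obvious; the obstruction is that $\{c,v\}$ can be a vertex cut of $\Gamma$ even when neither $c$ nor $v$ alone is, and this is precisely what produces a cut vertex $v$ in a smoothing at $c$.

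In short, the proposal is a reasonable start but is not a proof: the acknowledged obstacle is real and the sketched repairs do not close it. For a complete argument you should consult \cite{det} directly.
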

 
Given an oriented virtual  link diagram $D$, recall that the \textit{oriented smoothing} at a crossing $c$ is the diagram with the crossing $c$ removed, see \Cref{crossing-smoothing}. Recall also that a \textit{self-crossing} of $D$ is a crossing where one of the components of the link crosses over itself.

\begin{definition}\label{Dye-reduced}
Let $D$ be an oriented non-split virtual link diagram. Then a self-crossing $c$ is said to be  \textit{nugatory} if the oriented smoothing of $D$ at $c$ is a split link diagram. 

The diagram $D$ is said to be \textit{reduced} if it does not contain any nugatory crossings.
\end{definition}

There is an equivalent definition of nugatory crossing for links in surfaces. Let $c$ be a crossing of a connected link diagram $\cD$ on a surface $\Si$. Then $c$ is said to be \textit{nugatory} if there is a simple closed curve on $\Si$ that separates $\Si$ and intersects $\cD$ exactly once at $c$. 

For classical links, nugatory crossings are always removable. For virtual links, this is no longer true. 
Indeed, there are examples of virtual knots that contain \textit{essential} nugatory crossings, see \cite[Example 20]{Boden-Karimi-Sikora}. For welded links, nugatory crossings are once again always removable, see \Cref{rem-nug} below. 

Recall that associated with an alternating virtual link diagram $D$, there is an oriented two-in two-out graph $\Gamma$. If $D$ has no nugatory crossings, then $\Gamma$ has  no articulation vertex. 

\begin{corollary}
Let $K$ be an almost classical knot and $D$ a reduced alternating diagram for $K$. If $D$ has $n$ classical crossings, then
$$ |\Delta_K(-1)| \geq n.$$
\end{corollary}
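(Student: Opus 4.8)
The plan is to read off $|\Delta_K(-1)|$ first as a count of rooted trees and then as a count of Eulerian circuits, to which Theorem \ref{eulerian} applies directly. I would begin by invoking the (unlabeled) proposition that $\det(K)=|\Delta_K(-1)|$ for an almost classical link, so that it suffices to bound $\det(K)=|\Delta_K(-1)|$ below by $n$.

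Next I would revisit the matrix $H$ built in the proof of Theorem \ref{almost-alt} from the source-sink oriented graph $\Gamma$ associated to $D$, equipped with the valuation $f(u_{ij}^{\delta})\in\{1,-t\}$, for which $\Delta_K(t)=H_{ii}$. Specializing at $t=-1$ makes $-t=1$, so the valuation is identically $1$ and $H$ collapses to the plain graph matrix of the Matrix-Tree Theorem. By \Cref{bottm} (equivalently Theorem \ref{rooted}) the principal minor then equals the number $t_i(\Gamma)$ of rooted trees of $\Gamma$ with root $c_i$; taking absolute values gives $|\Delta_K(-1)|=t_i(\Gamma)$, a nonnegative integer.

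I would then exploit that $D$ is alternating, so $\Gamma$ is a two-in two-out oriented graph, and invoke the identity $t_i(\Gamma)=s(\Gamma)$ (the number of Eulerian circuits) that follows from Theorems \ref{rooted} and \ref{best} for such graphs. To apply Theorem \ref{eulerian} I must verify its two hypotheses: that $\Gamma$ is connected, which holds because a knot is non-split and its diagram is connected; and that $\Gamma$ has no articulation vertex, which holds because $D$ is reduced, i.e., free of nugatory crossings, as recorded immediately before the corollary. For $n\geq 2$ this yields $s(\Gamma)\geq n$, hence $|\Delta_K(-1)|\geq n$.

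The case $n=1$ falls outside Theorem \ref{eulerian}, so I would dispatch it separately: an almost classical knot is checkerboard colorable, so $\det(K)=|\Delta_K(-1)|$ is odd by \Cref{cor:cc}, hence at least $1=n$. The step I expect to require the most care is the specialization argument, namely confirming that at $t=-1$ the valued matrix $H$ becomes exactly the combinatorial graph matrix, so that its principal minor is genuinely the nonnegative integer $t_i(\Gamma)$, together with the careful check that a reduced, connected knot diagram produces a graph meeting precisely the hypotheses (connected, two-in two-out, no articulation vertex) demanded by Theorem \ref{eulerian}.
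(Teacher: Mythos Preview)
Your proposal is correct and takes essentially the same approach as the paper: specialize the matrix $H$ from the proof of Theorem~\ref{almost-alt} at $t=-1$ to identify $|\Delta_K(-1)|$ with the tree count $t_i(\Gamma)$, convert this to $s(\Gamma)$ via the BEST theorem for two-in two-out graphs, and conclude by Theorem~\ref{eulerian}. The paper's version is terser---it omits the detour through $\det(K)$, the explicit verification of the hypotheses of Theorem~\ref{eulerian}, and your separate treatment of $n=1$---but the core argument is identical.
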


\begin{proof}
By the proof of \Cref{almost-alt}, $|\Delta_{K}(-1)|$ counts $t_i(\Gamma)$ the number of rooted trees with root $c_i$ in the oriented graph $\Gamma$, associated with the knot diagram $D$. By \Cref{best}, $t_{i}(\Gamma)=s(\Gamma)$, and the result follows from \Cref{eulerian}.
\end{proof}

\begin{theorem}\label{link determinant}
Let $L$ be a non-split virtual link and $D$ a reduced alternating diagram for $L$. If $D$ has $n$ classical crossings, then the determinant of $L$ satisfies $$\det(L)\geq n.$$
\end{theorem}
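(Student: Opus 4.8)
The plan is to mirror the proof of the preceding corollary, reducing $\det(L)$ to the number $s(\Gamma)$ of Eulerian circuits of the associated oriented graph and invoking \Cref{eulerian}, but to carry out the key identification directly at the level of the coloring matrix so that no almost-classicality hypothesis on $L$ is needed. First I would check that the oriented graph $\Gamma$ associated to $D$ satisfies the hypotheses of \Cref{eulerian}. Since $D$ is alternating it is checkerboard colorable, so by \Cref{rem-source-sink} it admits a source-sink orientation and $\Gamma$ is two-in two-out (and in particular $\det(L)$ is defined). Since $L$ is non-split, its Gauss diagram, and hence $\Gamma$, is connected. And since $D$ is reduced it has no nugatory crossings, so $\Gamma$ is loopless and has no articulation vertex.

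The core step is to show that $\det(L)=t_i(\Gamma)$, the number of rooted trees of $\Gamma$ with any fixed root $c_i$. Because $D$ is alternating and non-split, between two consecutive under-crossings there is exactly one over-crossing, so each long arc is the over-crossing arc of a unique crossing; hence the long arcs are in bijection with the crossings, $D$ has $m=n$ of them, and the coloring matrix $B(D)$ is the $n\times n$ matrix whose rows and columns are both indexed by $c_1,\dots,c_n$. By the footnote to the definition of $B(D)$, this matrix is the Fox Jacobian $A(D)$ specialized at $t=-1$, up to a sign in each row, and by the construction in the proof of \Cref{almost-alt} the valuation matrix $H$ is the transpose of $A(D)$. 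Setting $t=-1$ makes the valuation identically $1$ on every edge, so $H$ becomes exactly the graph matrix of \Cref{rooted}; since transposition and row or column sign changes preserve absolute values of minors, the principal minor obtained by deleting the row and column of $c_i$ has absolute value $|H_{ii}|=t_i(\Gamma)$ by \Cref{rooted}. By \Cref{prop-alex-det} this principal minor is $\pm\det(L)$, whence $\det(L)=t_i(\Gamma)$.

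To finish I would invoke the BEST theorem. As $\Gamma$ is two-in two-out, \Cref{best} gives $t_i(\Gamma)=s(\Gamma)$, and \Cref{eulerian} gives $s(\Gamma)\ge n$ whenever $n\ge 2$; combining these yields $\det(L)\ge n$. The remaining cases are immediate: if $n=0$ then $L$ is the unknot and $\det(L)=1>0$, while if $n=1$ then $\Gamma$ is a single vertex with $t_i(\Gamma)=1=n$, so $\det(L)\ge n$ holds in every case.

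The main obstacle I anticipate is precisely the identification $\det(L)=t_i(\Gamma)$ in the absence of the almost-classical hypothesis, where in the corollary that quantity was read off from $\Delta_L(-1)$. Here one must instead argue directly with $B(D)$: verify that non-splitness forces $m=n$, so that the relevant $(n-1)\times(n-1)$ minor is genuinely a principal minor of the square graph matrix of $\Gamma$, and then check that every sign ambiguity—the per-row signs relating $B(D)$ to $A(-1)$, the transposition $H=A(D)^{\mathsf{T}}$, and the sign in \Cref{prop-alex-det}—disappears under the absolute value. Once this bookkeeping is in place, the combinatorial input from \Cref{best} and \Cref{eulerian} closes the argument exactly as in the knot case.
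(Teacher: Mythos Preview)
Your approach is correct and is essentially the paper's: build the two-in two-out graph $\Gamma$, identify $\det(L)$ with the number of rooted trees via the matrix-tree machinery (the paper cites \Cref{bottm}; you set $t=-1$ so the valuation is identically $1$ and invoke \Cref{rooted}, which amounts to the same thing), pass to $s(\Gamma)$ via BEST, and finish with \Cref{eulerian}. The paper's proof is a two-sentence sketch referring back to \Cref{almost-alt}, whereas you spell out the coloring-matrix identification and the hypothesis checks explicitly, but the logical skeleton is identical.
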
   

\begin{proof}
Since $D$ is alternating, we can repeat the proof of \Cref{almost-alt}. By \Cref{bottm}, the determinant of $L$ counts the number of spanning trees which is equal to $s(\Gamma)$. The result follows from \Cref{eulerian}. 
\end{proof}

\begin{corollary} \label{cor:split}
Suppose $L$ is a virtual link which admits an alternating diagram $D$ without nugatory crossings. Then $L$ is a split link if and only if  $D$ is a split diagram.
\end{corollary}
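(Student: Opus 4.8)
The plan is to prove the two implications separately, with essentially all of the work concentrated in one of them. The implication ``$D$ split $\Rightarrow$ $L$ split'' is immediate from the definitions: a virtual link is \emph{split} precisely when it can be represented by a split diagram, so if $D$ itself is split there is nothing to prove. The content of the corollary is the converse, that a split link forces the reduced alternating diagram to be split, and I would establish this in contrapositive form: assuming $D$ is \emph{not} a split diagram, I would show that $L$ is \emph{not} split.

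First I would record that the determinant is available and that \Cref{split det} applies. Since $D$ is alternating, its associated graph $\Gamma$ carries the alternate (source-sink) orientation, so by \Cref{rem-source-sink} the diagram $D$ is checkerboard colorable; hence $\det(L)$ is a well-defined invariant and \Cref{split det} gives $\det(L)=0$ whenever $L$ is split. The strategy is therefore to prove $\det(L)>0$ under the assumption that $D$ is not split, which contradicts $\det(L)=0$ and thus rules out $L$ being split.

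Next I would translate the diagrammatic hypotheses into graph-theoretic statements about the two-in two-out graph $\Gamma$ on the classical crossings $c_1,\dots,c_n$. The assumption that $D$ is not a split diagram means its Gauss diagram is connected, which makes $\Gamma$ connected; the assumption that $D$ has no nugatory crossings means, as noted just before the corollary, that $\Gamma$ has no articulation vertex. A connected graph in which every vertex is balanced ($d^{+}(c_i)=d^{-}(c_i)=2$) admits an Eulerian circuit, so $s(\Gamma)\geq 1$. Running the computation from the proofs of \Cref{almost-alt} and \Cref{link determinant} --- Bott and Mayberry (\Cref{bottm}) identify the relevant principal minor $H_{ii}$ with a count of rooted trees, and the BEST theorem (\Cref{best}) identifies this with $s(\Gamma)$ in the two-in two-out case --- yields $\det(L)=s(\Gamma)\geq 1>0$. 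This is the required contradiction, so $L$ is not split, completing the converse. One can sharpen $s(\Gamma)\geq 1$ to $s(\Gamma)\geq n$ via \Cref{eulerian}, recovering \Cref{link determinant}, but mere positivity is all the corollary needs.

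The main obstacle is conceptual rather than computational: it lies in correctly reformulating ``$D$ is not split'' as ``$\Gamma$ is connected.'' In the virtual setting a diagram can be split yet drawn connected in the plane, so splitness must be read off the Gauss diagram rather than the planar picture, and one must attend to degenerate cases. A crossing-free component (contributing no vertices to $\Gamma$) makes $D$ a split diagram outright and so falls under the easy direction, while the single-component case with $n=0$ is simply the unknot, which is neither split nor split-diagrammed, consistent with the claim. Once this translation is pinned down, the positivity of $\det(L)$ follows from the already-assembled rooted-tree and Eulerian-circuit machinery, and the contradiction with \Cref{split det} closes the argument.
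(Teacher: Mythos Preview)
Your argument is correct and follows essentially the same route as the paper: the easy direction is immediate, and for the contrapositive the paper simply invokes \Cref{link determinant} (giving $\det(L)\geq n>0$) together with \Cref{split det}. Your version unpacks that invocation and observes that mere positivity $\det(L)=s(\Gamma)\geq 1$ already suffices, so the Balister--Bollob\'as--Riordan--Scott bound of \Cref{eulerian} is not strictly needed for the corollary; this is a small but valid simplification of the paper's proof.
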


\begin{proof}
Clearly, if $D$ is a split diagram, then $L$ is split. Suppose then that $D$ is a non-split  alternating diagram with $n=n(D) >0$ classical crossings. (If $n=0$, then $D$ has one component and is an unknot diagram.) \Cref{link determinant} implies that $\det(L)\geq n$. Hence $\det(L)\neq 0$, and \Cref{split det} shows that $L$ is not split.
\end{proof}
\setcounter{theorem}{0} 
\setcounter{section}{6} \noindent
\subsection{Weakly alternating virtual links}\label{sec-6}
In this section, we extend the results from the previous section to semi-alternating virtual links, defined below. We also give a formula for the link determinant of a connected sum, and we use it to show that a semi-alternating virtual link is split if and only if it is visibly split. 

We begin by reviewing the connected sum of virtual links. 
Suppose $D_1$ and $D_2$ are virtual link diagrams and $p_1, p_2$ are points on $D_1,D_2$ respectively, distinct from the crossings. The connected sum is denoted $D_1 \# D_2$ and is formed by removing small arcs from $D_1$ and $D_2$ near the basepoints and joining them with trivial unknotted arcs. If $D_1$ and $D_2$ are oriented, then the arcs are required to preserve orientations. 
The connected sum depends on the choice of diagrams and basepoints. It does not lead to a well-defined operation on virtual links.

\begin{definition}\label{semi-alternating}
A virtual link diagram $D$ is said to be \textit{semi-alternating} if it can be written 
$D=D_1 \# \cdots \# D_n$, a connected sum of alternating virtual link diagrams $D_1,\ldots, D_n$. 
\end{definition}

The set of semi-alternating virtual links includes, as a proper subset, those that can be represented as weakly alternating links in thickened surfaces, see \cite[\S 5]{Boden-Karimi-Sikora}.

Every semi-alternating virtual link diagram is checkerboard colorable. This follows from the fact that every alternating virtual link diagram is checkerboard colorable (see \cite[Lemma 7]{Kamada-2002}), and the observation that the connected sum of two or more checkerboard colorable diagrams is checkerboard colorable. 

\begin{definition}
A virtual link is said to be \textit{$w$-split} if it is welded equivalent to a split virtual link. 
\end{definition}
Clearly, any virtual link that is split is necessarily $w$-split, but there are virtual links that are $w$-split but not split. For example, consider the virtual link $L$ whose Gauss diagram appears on the left of \Cref{splitlink}. Using forbidden moves, it is seen to be welded equivalent to the split classical link $8_{20} \cup \bigcirc$ shown on the right. Thus $L$ is $w$-split.

Let $L'=8_{20} \cup \bigcirc$ be the split classical link shown on the right of \Cref{splitlink}. Using \cite[Definition 3.1 \& Theorem 3.5]{Lick}, one can see that its Jones polynomial satisfies
\begin{equation*}
\begin{split}
(t^{-1/2}-t^{1/2}) V(L') &=  (t-t^{-1}) V(8_{20}) \\ 
& = t^{-6} -t^{-5} -t^{-3}+2t-t^2.
\end{split}
\end{equation*}
In particular,  $(t^{-1/2}-t^{1/2}) V(L')$ lies in $\ZZ[t,t^{-1}].$
(This is true for any classical link with two components.)
On the other hand,  direct computation shows that the Jones polynomial of  $L$ satisfies
\begin{equation*}
\begin{split}
(t^{-1/2} -t^{1/2}) V(L)  = & -t^{-1/2} + 3t^{-3/2} + 2t^{-2} - 3t^{-5/2} - 3t^{-3} + 2t^{-7/2} +\\
& \quad 3t^{-4} - t^{-9/2} - 3t^{-5} - t^{-11/2} + t^{-6} + t^{-13/2}.
\end{split}
\end{equation*}
Since $(t^{-1/2} -t^{1/2}) V(L)$ does not lie in $\ZZ[t,t^{-1}]$, $L$ cannot be virtually equivalent to a classical link. On the other hand, if $L$ were split, then it would be virtually equivalent to $L'$. Since that is not the case, we see that $L$ is non-split. 

\begin{figure}[h!]
\centering\includegraphics[scale=1.4]{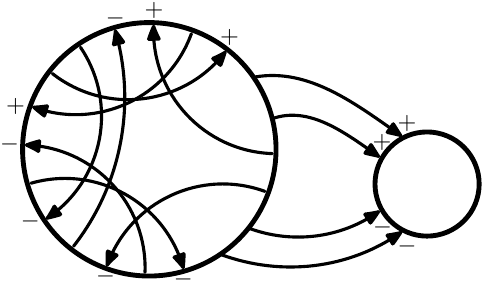}\hspace{1.5cm}
\centering\includegraphics[scale=0.70]{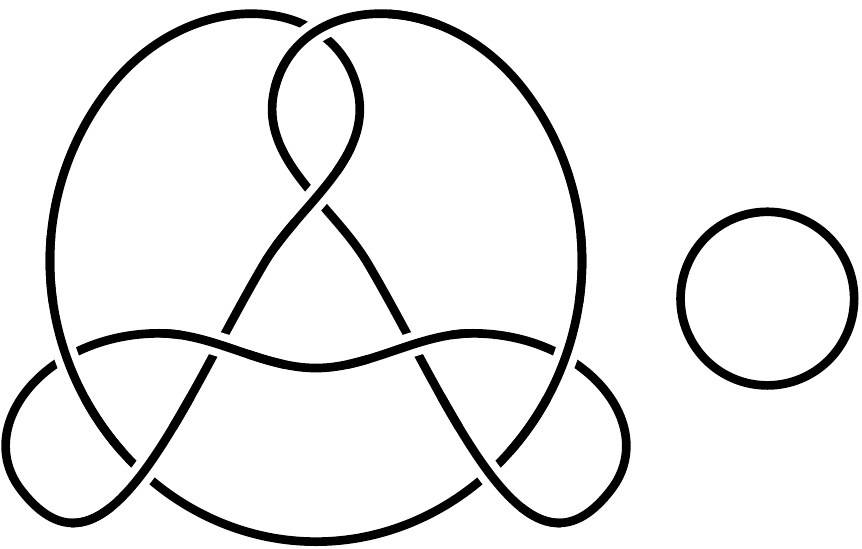}
\caption{\small The Gauss diagram for a non-split virtual link $L$ (left). Notice that $L$ is $w$-split; in fact it is welded equivalent to the link $8_{20} \cup \bigcirc$ (right).}\label{splitlink}
\end{figure}

\begin{proposition} \label{w-split}
If $L$ is $w$-split, then $\det(L) = 0.$
\end{proposition}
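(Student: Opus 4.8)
The plan is to combine the welded invariance of the link determinant with \Cref{split det}. By definition, $L$ being $w$-split means there is a split virtual link $L'$ that is welded equivalent to $L$; and since the statement presupposes that $\det(L)$ is defined, $L$ is checkerboard colorable. The first step is to record that the link determinant is an invariant of welded equivalence. This is because $\det(L)$ is extracted from the coloring matrix $B(D)$, which (up to sign in each row) is the specialization at $t=-1$ of the Fox Jacobian of the Wirtinger presentation of the link group $G_L$, and $G_L$ together with its peripheral structure is invariant under the forbidden move $(f1)$, as noted in \ref{sec-3}. Concretely, by \Cref{prop-alex-det} the quantity $\det(L)$ is the absolute value of an $(n-1)\times(n-1)$ minor of $B(D)$, and this value is unchanged under the generalized Reidemeister moves and under $(f1)$.

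Granting this, I would argue as follows. Welded equivalence of $L$ and $L'$ gives $\det(L)=\det(L')$. Then \Cref{split det} applies to the split link $L'$ to yield $\det(L')=0$, and therefore $\det(L)=0$. The whole argument is thus the short chain $\det(L)=\det(L')=0$, with the two equalities supplied by welded invariance and by \Cref{split det} respectively.

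The one point requiring care — and the main obstacle — is to ensure that $L'$ is itself checkerboard colorable, so that $\det(L')$ is defined and \Cref{split det} is applicable to $L'$. This reduces to showing that checkerboard colorability is a welded invariant: since $L$ is checkerboard colorable and $L'$ is welded equivalent to $L$, it would follow that $L'$ is checkerboard colorable as well. One way to establish this invariance is through the characterization of checkerboard colorability as $\ZZ/2$ null-homology of the associated link in a thickened surface recorded in \ref{sec-2}; alternatively, one checks directly at the level of Gauss diagrams that the existence of a mod $2$ Alexander numbering persists across the forbidden move $(f1)$, since $(f1)$ merely exchanges two adjacent arrow feet without altering signs or arrowheads. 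With the checkerboard colorability of $L'$ secured, the equalities $\det(L)=\det(L')=0$ complete the proof.
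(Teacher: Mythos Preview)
Your approach is exactly the paper's: combine \Cref{split det} with the welded invariance of $\det$. The paper's own proof is one sentence and does not address the subtlety you raise about whether $L'$ is checkerboard colorable.

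However, your resolution of that subtlety contains a genuine error. Checkerboard colorability is \emph{not} a welded invariant, and neither of your two proposed justifications works. The $\ZZ/2$ null-homology characterization in \ref{sec-2} is a property of the link in a thickened surface, and welded equivalence does not respect that surface model. The ``direct check'' on Gauss diagrams also fails: the mod~$2$ Alexander numbering imposes, for each chord, a nonlocal constraint relating the labels near the head and the tail of that chord, and swapping two adjacent arrow feet changes which arc sits just before each tail, breaking that constraint. Concretely, the virtual trefoil (Gauss code $O_1^+O_2^+U_1^+U_2^+$) is not checkerboard colorable, yet a single $f1$ move turns it into a two-crossing classical unknot diagram, which is checkerboard colorable; running the move backwards shows $f1$ can destroy checkerboard colorability.

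The correct way to close the gap---and what the paper implicitly relies on---is that $\det(L)$ depends only on the link group $G_L$. Indeed, the coloring matrix $B(D)$ is a presentation matrix for $H_1(X)$, where $X$ is the two-fold cover of $S^4$ branched along $\Tube(L)$, and the paper records $\det(L)=|H_1(X)|$ immediately after this proposition. Since $G_L$ is welded invariant and $G_{L'}$ is a free product when $L'$ is split, the block structure of \emph{any} Wirtinger presentation forces the relevant minor to vanish, giving $\det(L)=0$ without ever needing $L'$ to be checkerboard colorable.
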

\begin{proof}
This follows directly from \Cref{split det} and the fact that $\det(L)$ is an invariant of welded links.
\end{proof}

There is a nice geometric interpretation of $\det(L)$ in terms of two-fold branched covers. Let $T =\Tube(L)$ be the ribbon torus link associated to $L$, and let $X$ be the two-fold cover of $S^4$ branched along $T$. Using the isomorphism $\pi_1(S^4 \sm T) \cong G_L$, one can identify $\pi_1(X)$ with the quotient of $G_L$ under the relation $x_i^2 =1$ for each generator in \eqref{eq:Wirt}. The coloring matrix is then a presentation matrix for $H_1(X)$. Therefore, $\det(L) = |H_1(X)|$ if it is  finite, and $\det(L) = 0$ if $H_1(X)$ is infinite. Here homology groups are taken with $\ZZ$ coefficients. Note that, if $L$ is split, then $H_1(X)$ is infinite. This gives an alternative explanation for Propositions \ref{split det} and \ref{w-split}.

\begin{theorem} \label{product}
If $D=D_1 \# D_2$ is a connected sum of two checkerboard colorable virtual link diagrams, then  $\det(D) =\det(D_1) \det(D_2).$
\end{theorem}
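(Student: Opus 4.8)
The plan is to compute one convenient $(n-1)\times(n-1)$ minor of the coloring matrix $B(D)$ of the connected sum and show it factors as a minor for $D_1$ times a minor for $D_2$. Write $n_i,m_i$ for the numbers of classical crossings and long arcs of $D_i$, with $m_i=n_i+k_i-1$ where $k_i$ is the number of components, so that $n=n_1+n_2$ and $m=m_1+m_2$. The connected sum is formed at interior points $p_1\in D_1$ and $p_2\in D_2$ lying on long arcs $a\subset D_1$ and $b\subset D_2$; after the identification these two arcs merge into \emph{exactly two} long arcs $\alpha$ and $\beta$ of $D$, where $\alpha$ consists of the sub-arc of $a$ running into $p_1$ together with the sub-arc of $b$ running out of $p_2$, and $\beta$ of the two complementary pieces. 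First I would record that every crossing relation of $D$ is inherited unchanged from $D_1$ or $D_2$, the only difference being that occurrences of $a$ are replaced by $\alpha$ or $\beta$ according to which side of $p_1$ they lie on (and similarly for $b$), and that the two endpoints of $a$ contribute their $-1$ entries to $\alpha$ and to $\beta$ respectively.

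This yields a block description. Ordering the rows as the crossings of $D_1$ followed by those of $D_2$, and the columns as the non-$a$ arcs of $D_1$, then $\alpha$, then $\beta$, then the non-$b$ arcs of $D_2$, one obtains
$$ B(D)=\begin{bmatrix} C_1 & v_1^{\alpha} & v_1^{\beta} & 0 \\ 0 & v_2^{\alpha} & v_2^{\beta} & C_2 \end{bmatrix}, $$
where $C_1,C_2$ collect the non-$a$, non-$b$ columns of $B(D_1),B(D_2)$, the two off-diagonal zero blocks reflect that a crossing of one summand never involves an arc interior to the other, and $v_1^{\alpha}+v_1^{\beta}$, $v_2^{\alpha}+v_2^{\beta}$ are the $a$-column of $B(D_1)$ and the $b$-column of $B(D_2)$. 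Next I would select the minor $M$ obtained by deleting the $\beta$-column, deleting $k_1-1$ further columns from $C_1$ and $k_2-1$ from $C_2$, and deleting one row among the crossings of $D_2$, so that the surviving blocks $\hat C_1$ (size $n_1\times(n_1-1)$) and $\hat C_2$ (size $(n_2-1)\times(n_2-1)$) have the right shape. Expanding $\det M$ along the $\alpha$-column, every term whose deleted row lies in the $D_2$-block vanishes, because the resulting top block retains all $n_1$ rows of $\hat C_1$ while having only $n_1-1$ columns and is therefore singular; the surviving terms assemble into $\det M=\pm\det[\hat C_1\mid v_1^{\alpha}]\cdot\det\hat C_2$. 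Since $\hat C_2$ is a maximal minor of $B(D_2)$ obtained by deleting the $b$-column, \Cref{prop-alex-det} gives $|\det\hat C_2|=\det(D_2)$, while \Cref{prop-alex-det} applied to the (checkerboard colorable) connected sum gives $\det(D)=|\det M|$.

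The main obstacle is identifying $|\det[\hat C_1\mid v_1^{\alpha}]|$ with $\det(D_1)$, since $[\hat C_1\mid v_1^{\alpha}]$ is an $n_1\times n_1$ matrix built from only part of the $a$-column and is not literally a minor of $B(D_1)$. My remedy is a subdivision trick: introduce a Reidemeister ($r1$) kink on the arc $a$ at $p_1$, which splits $a$ into the two arcs carrying the columns $v_1^{\alpha}$ and $v_1^{\beta}$ and adds one crossing whose coloring relation has the coincidental form, i.e.\ the row $[\,0\cdots 0\ 1\ -1\,]$. The kinked diagram $\tilde D_1$ is Reidemeister equivalent to $D_1$ and remains checkerboard colorable, and its coloring matrix is precisely the one obtained from $[\,C_1\mid v_1^{\alpha}\mid v_1^{\beta}\,]$ by appending that row. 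Then $[\hat C_1\mid v_1^{\alpha}]$ is the maximal minor of this matrix gotten by deleting the appended row together with the $v_1^{\beta}$-column (and the $k_1-1$ unused $C_1$-columns), so \Cref{prop-alex-det} applied to $\tilde D_1$ gives $|\det[\hat C_1\mid v_1^{\alpha}]|=\det(\tilde D_1)=\det(D_1)$. Combining the displays then yields $\det(D)=\det(D_1)\det(D_2)$.

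I expect the fiddly core of the write-up to be exactly this kink step: verifying which portions of $a$ carry $v_1^{\alpha}$ versus $v_1^{\beta}$, that the endpoints of $a$ deposit their $-1$ entries into the correct columns, that the extra kink crossing produces precisely the claimed $1,-1$ relation, and that the degenerate coincidental cases (an arc serving simultaneously as over- and under-arc) do not disrupt the block form. Everything else is routine block linear algebra supported by \Cref{prop-alex-det}.
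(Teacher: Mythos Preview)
Your argument is correct and takes a genuinely different route from the paper. The paper itself remarks that ``there is a proof which is direct and elementary but long,'' and then instead proves the theorem topologically: it interprets $\det(D)$ as $|H_1(X)|$ where $X$ is the two-fold cover of $S^4$ branched along $\Tube(D)$, decomposes $X$, $X_1$, $X_2$ into pieces $A_i$ (branched over the knotted part) and $B_i\cong S^2\times D^2$ (branched over a trivial annulus), and runs Mayer--Vietoris to obtain $H_1(X)\cong H_1(X_1)\oplus H_1(X_2)$. Your proof is exactly the direct linear-algebra argument the paper declines to write out: a block description of $B(D)$, a Laplace/block expansion isolating $\det \hat C_2$, and the very nice $r1$-kink trick to realize $[C_1\mid v_1^\alpha]$ as a genuine maximal minor of a Reidemeister-equivalent diagram $\tilde D_1$, so that \Cref{prop-alex-det} applies. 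Each approach buys something: the paper's is shorter and explains \emph{why} the determinant is multiplicative (first homology of a branched cover splits), at the cost of importing Satoh's tube map and the Akbulut--Kirby identification of $B_i$; yours stays entirely within the elementary framework of \S3 and needs nothing beyond \Cref{prop-alex-det} and invariance under $r1$.

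Two small clean-ups you might make in the write-up. First, the ``expansion along the $\alpha$-column'' is really a block Laplace expansion along the first $n_1$ rows: since the top-right block of $M$ vanishes, the only surviving complementary minor is $\det[C_1\mid v_1^\alpha]\cdot\det\hat C_2$; phrasing it this way avoids the row-by-row casework. Second, the disconnected case ($k_i>1$) is better disposed of up front exactly as the paper does: if either $D_i$ is split then so is $D$, and both sides vanish by \Cref{split det}. This lets you take $k_1=k_2=1$ throughout and drop the extra column deletions, so that \Cref{prop-alex-det} applies in its simplest (square-matrix) form.
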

\begin{proof}
If $D_1$ or $D_2$ is split, then $D$ is split and $\det(D)=0= \det(D_1)\det(D_2).$ Therefore, we can assume that $D_1$ and $D_2$ are non-split. 

There is a proof which is direct and elementary but long. We present an alternative proof that is shorter and makes use of the interpretation of $\det(D)$ as the order of the first homology of the two-fold cover of $S^4$ branched along $\Tube(D)$. In the following, all homology groups are taken with $\ZZ$ coefficients.

Let $D_1$ and $D_2$ be checkerboard colorable virtual link diagrams, and let $X_1, X_2,$ and $X$ be the two-fold covers of $S^4$ branched along $\Tube(D_1), \Tube(D_2)$ and $\Tube(D)$, respectively.
We can then write $X_1=A_1\cup B_1, X_2=A_2\cup B_2$, and $X=A_1\cup A_2$. Here 
$A_i$ is the double cover of $D^4$ branched along the knotted annulus which is part of $\Tube(D_i)$ for $i=1,2$, and $B_i$ is the double cover of $D^4$ branched along trivial annulus. In particular, $A_i = X_i \sm \Int(B_i)$ for $i=1,2$.  By \cite[Corollary 4.3]{Kirby-Akbulut}, $B_i$ is diffeomorphic to $S^2\times D^2$, and  $H_1(B_i)=0$ for $i=1,2.$ 

Let $M=A_1\cap B_1=A_2\cap B_2=A_1\cap A_2$. Then $M$ is the 3-manifold obtained as the double cover of $S^3$ branched along the two component unlink. Thus $M$ can be identified with the boundary of $B_1$ (or $B_2$) and is diffeomorphic to $S^2 \times S^1$. Thus $H_1(M) \cong \ZZ.$ 
 
Now consider the decompositions $X_1=A_1 \cup B_1$, $X_2=A_2 \cup B_2$, and $X =A_1 \cup A_2$, along with their Mayer-Vietoris sequences in reduced homology:
\begin{equation}\label{eq-MV}
\begin{array}{c}
  \xymatrix{\cdots \ar[r]^{} & H_1(A_1\cap B_1)\ar[r]^{\varphi_1\quad} & H_1(A_1)\oplus H_1(B_1)\ar[r]^{\qquad \psi_1} & H_1(X_1)\ar[r]^{}&0.}\\
   \xymatrix{\cdots \ar[r]^{} & H_1(A_2\cap B_2)\ar[r]^{\varphi_2\quad} & H_1(A_2)\oplus H_1(B_2)\ar[r]^{\qquad \psi_2} & H_1(X_2)\ar[r]^{}&0.} \\
 \xymatrix{\cdots \ar[r]^{} & H_1(A_1\cap A_2)\ar[r]^{\varphi\quad} & H_1(A_1)\oplus H_1(A_2)\ar[r]^{\qquad \psi} & H_1(X)\ar[r]^{}&0.}
\end{array} 
\end{equation}

Note that $H_1(A_1\cap B_1) \cong \ZZ \cong H_1(A_2\cap B_2) = H_1(A_1\cap A_2)$.

We claim that the maps $\varphi_1,\varphi_2$ and $\varphi$ are zero. We prove this for $\varphi$;  the argument for the other cases is similar. It suffices to show that the maps $H_1(A_1\cap A_2) \to H_1(A_i)$  induced by inclusion are zero for $i=1,2$. 

Take two points in $S^3$, one on each component of the unlink, and join them by an arc in $S^3$ that is otherwise disjoint from the link. The arc lifts to a loop in the double branched cover, and the loop is a generator of $H_1(A_1 \cap A_2)$. However, when pushed into $A_1$,  the loop does not link the annulus in $D^4$, so it is trivial in $H_1(A_1)$. A similar argument shows it is also trivial in $H_1(A_2)$. Therefore, the maps $H_1(A_1\cap A_2) \to H_1(A_j)$ are zero for $j=1,2$, and it follows that $\varphi=0.$

From the claim, it follows that $\psi_1,\psi_2,$ and $\psi$ are isomorphisms. Using \eqref{eq-MV} and the fact that $H_1(B_1)=0=H_1(B_2)$, we deduce that 
$$H_1(X_1) \cong H_1(A_1), \; H_1(X_2) \cong H_1(A_2), \text{ and } H_1(X)\cong H_1(A_1) \oplus H_1(A_2).$$ 
Therefore,
\begin{equation*}
\begin{split}
\det(D)&=|H_1(X)|,\\
&=|H_1(A_1)| \cdot |H_1(A_2)|,\\
&=|H_1(X_1)|\cdot |H_1(X_2)|,\\
&=\det(D_1)\det(D_2),
\end{split}
\end{equation*}  
and this completes the proof.  
\end{proof}

\begin{remark} \label{rem-nug}
We claim that, for welded links, nugatory crossings are always removable. Let $D$ be a diagram with a nugatory crossing $c$ as in \Cref{nugatory} (left). Using forbidden moves, we can transform $D$ by pulling the over-crossing arc off $c$, as in \Cref{nugatory} (middle). Thus, $D$ is welded equivalent to the diagram with $c$ removed. Alternatively, one can remove $c$ by making it virtual, as in \Cref{nugatory} (right). At the level of Gauss diagrams, this is equivalent to deleting the chord associated to $c$. 
\end{remark}

\begin{figure}[h!]
\centering\includegraphics[scale=1.40]{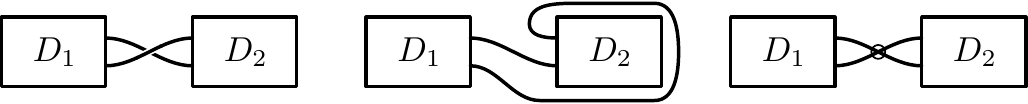}
\caption{\small For welded links, nugatory crossing are removable.} \label{nugatory}
\end{figure}

\begin{proposition} \label{lem-split}
Let $D$ be a virtual link diagram. If $D$ is non-split and alternating, then $\det(D) \neq 0$. 
\end{proposition}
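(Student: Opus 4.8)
The plan is to reduce the claim to the inequality already established in \Cref{link determinant}, using the notion of a reduced diagram as the bridge. The subtlety is that the statement here asks only for $D$ to be non-split and alternating, with \emph{no} hypothesis that $D$ is reduced (that is, $D$ may contain nugatory crossings), whereas \Cref{link determinant} requires a reduced alternating diagram. So the first and main step is to pass from an arbitrary non-split alternating diagram $D$ to a reduced one without destroying either the alternating property or the invariant $\det(D)$.

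First I would observe that $\det(D)$ is a genuine invariant of the virtual link $L$ represented by $D$ (by \Cref{prop-alex-det}), so it suffices to produce \emph{some} reduced alternating diagram for the same link and invoke \Cref{link determinant}. To build such a diagram, I would repeatedly remove nugatory crossings. Recall from \Cref{Dye-reduced} that a self-crossing $c$ is nugatory when the oriented smoothing of $D$ at $c$ is split; at such a crossing one of the two complementary regions can be ``untwisted,'' and smoothing (or an $r1$-type simplification) removes the crossing. The key points to check are that (i) removing a nugatory crossing from a non-split diagram yields a diagram representing the same non-split link, and (ii) the alternating property is preserved, since a nugatory crossing of an alternating diagram can be eliminated by a Reidemeister I move (or its virtual analogue) that leaves the remaining crossings alternating. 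Since each removal strictly decreases the classical crossing count, the process terminates in a reduced alternating diagram $D'$ for $L$.

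Once $D'$ is reduced, I would split into two cases according to $n' = n(D')$, the number of classical crossings of $D'$. If $n' \geq 1$, then $D'$ is a reduced alternating diagram for the non-split link $L$, so \Cref{link determinant} gives $\det(L) \geq n' \geq 1 > 0$, and since $\det(D) = \det(L)$ we are done. If $n' = 0$, then $D'$ has no classical crossings, so the associated Gauss diagram has no chords; being non-split, its underlying diagram is connected, forcing $L$ to be the unknot, for which $\det(L) = 1 \neq 0$. In either case $\det(D) \neq 0$.

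The main obstacle I anticipate is justifying step (ii) carefully: one must confirm that eliminating a nugatory crossing does not accidentally force two adjacent crossings to become non-alternating or create a new split component. For virtual diagrams this requires attention, since (as the paper emphasizes) nugatory crossings need not be removable in general --- there exist \emph{essential} nugatory crossings. However, the relevant fact is that the \emph{determinant} is unchanged and the \emph{link type} may change only in a controlled way; what I really need is not that $L$ is preserved but that non-splitness and positivity of the determinant are preserved, and the cleanest route may be to argue that smoothing a nugatory self-crossing in a non-split alternating diagram produces another non-split alternating diagram with strictly fewer crossings and the \emph{same} determinant, appealing to the block/relation structure of the coloring matrix as in the proofs of \Cref{split det} and \Cref{prop-alex-det}. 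Establishing this invariance of $\det$ under nugatory-crossing removal is the crux of the argument.
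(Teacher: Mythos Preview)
Your overall strategy—reduce to \Cref{link determinant} by passing to a reduced diagram—is the right instinct, but two of your intermediate claims fail in the virtual setting, and the paper's proof takes a different route precisely to avoid them.

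First, you assert that removing a nugatory crossing yields a diagram representing the \emph{same virtual link}. As you yourself note, this is false: the paper explicitly points out that there exist essential nugatory crossings for virtual knots. You try to sidestep this by saying you only need $\det$ and non-splitness to be preserved, but you never actually establish that $\det$ is invariant under your proposed smoothing operation, and you flag this yourself as ``the crux of the argument.'' Second, you claim that after removing nugatory crossings the diagram remains \emph{alternating}. In the classical case this is clear (a nugatory crossing is a kink removable by $r1$), but in the virtual case the two halves separated by a nugatory crossing may be connected through virtual crossings in a way that makes the reconnected diagram only a \emph{connected sum} of alternating pieces, i.e.\ semi-alternating in the sense of \Cref{semi-alternating}, not alternating.

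The paper handles both issues at once. It works in the \emph{welded} category, where nugatory crossings are always removable (\Cref{rem-nug}) and $\det$ is a welded invariant; this resolves your first gap cleanly. It then accepts that the resulting reduced diagram $D'$ is only semi-alternating, writes $D'=D'_1\#\cdots\#D'_n$ with each $D'_i$ reduced alternating, and invokes the product formula $\det(D')=\prod_i\det(D'_i)$ from \Cref{product}. If $\det(D)=0$ then some $\det(D'_i)=0$, forcing $D'_i$ to be split by \Cref{link determinant}, hence $D'$ and $D$ are split—contradiction. So the missing ingredients in your proposal are exactly the welded-equivalence observation and the connected-sum formula for the determinant.
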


\begin{proof}
In general, if $D$ is a non-reduced virtual link diagram, then successively removing all the nugatory crossings will produce a reduced diagram $D'$ welded equivalent to $D$. If $D$ is non-split, then $D'$ will be too. If $D$ is alternating, then $D'$ will be semi-alternating.

Assume to the contrary that $\det(D)=0.$  Then $\det(D')=0$ since $\det(\cdot)$ is an invariant of welded type. Since $D$ is non-split and alternating, it follows that
$D'$ is reduced, non-split, and semi-alternating.  
Therefore, we can write $D'=D'_1 \# \cdots \# D'_n$, where $D'_1,\ldots, D'_n$ are all reduced alternating diagrams. By \Cref{product}, 
$$0=\det(D')= \det(D'_1)\cdots \det(D'_n),$$ 
thus $\det(D'_i)=0$ for some $1\leq i \leq n.$ Since $D'_i$ is reduced and alternating,  $\det(D'_i)=0$ implies that $D'_i$ is split.  It follows that $D'$ is split, which implies that $D$ is split, giving the desired contradiction.
\end{proof}

\begin{corollary} 
Suppose $L$ is a virtual link which admits a semi-alternating diagram $D$, possibly with nugatory crossings. Then $L$ is $w$-split if and only if $D$ is a split diagram.
\end{corollary}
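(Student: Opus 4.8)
The plan is to deduce this from \Cref{w-split}, \Cref{lem-split}, and the welded invariance of the determinant, in exact parallel with \Cref{cor:split}. One direction is immediate: if $D$ is a split diagram then $L$ is a split link, and every split link is $w$-split. So the substance is the forward implication, which I would prove by contraposition.

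Suppose $D$ is non-split; I want to conclude that $L$ is not $w$-split. Since $D$ is semi-alternating it is checkerboard colorable, so $\det(D)$ is defined and $\det(L)=\det(D)$ by \Cref{prop-alex-det}. By \Cref{w-split} it then suffices to show $\det(L)\neq 0$, that is, that a non-split semi-alternating diagram has nonzero determinant. This is essentially the assertion of \Cref{lem-split}, and although that proposition is phrased for alternating $D$, its proof already routes through the semi-alternating case: one strips the nugatory crossings by welded moves (\Cref{rem-nug}) to obtain a reduced diagram $D'$, welded equivalent to $D$ and still non-split, which decomposes as a connected sum $D'=D'_1 \# \cdots \# D'_n$ of reduced alternating diagrams. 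The same reduction applies verbatim when $D$ is semi-alternating to begin with, since removing the nugatory crossings from each alternating summand leaves a connected sum of reduced alternating diagrams.

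Granting this, the endgame is a short computation. By welded invariance $\det(D')=\det(D)=\det(L)$, and \Cref{product} gives $\det(D')=\det(D'_1)\cdots\det(D'_n)$. If this product vanished, some factor $\det(D'_i)$ would be zero; but $D'_i$ is reduced and alternating, so $\det(D'_i)=0$ forces $D'_i$ to be split (via the inequality of \Cref{link determinant}, as in \Cref{cor:split}), whence $D'$ and therefore $D$ would be split, contradicting our assumption. Thus $\det(L)\neq 0$, and \Cref{w-split} shows $L$ is not $w$-split, completing the contrapositive.

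I expect the only delicate point to be the bookkeeping behind the claim that reducing a semi-alternating diagram again yields a connected sum of reduced alternating diagrams, that is, that the nugatory crossings can be removed summand-by-summand without disturbing the semi-alternating structure. Everything else is a direct assembly of the stated results, and the identification $\det(L)=\det(D)$ uses only that semi-alternating diagrams are checkerboard colorable.
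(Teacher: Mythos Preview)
Your argument is correct and follows the same overall strategy as the paper (show $\det(L)\neq 0$ via the product formula and \Cref{w-split}), but the paper organizes the endgame more cleanly and in doing so sidesteps exactly the bookkeeping you flag. Rather than reducing $D$ first, the paper simply writes the non-split semi-alternating diagram as $D=D_1\#\cdots\#D_n$ with each $D_i$ alternating, observes that each $D_i$ is automatically non-split (otherwise $D$ would be), and applies \Cref{lem-split} \emph{directly to each $D_i$} to get $\det(D_i)\neq 0$; then \Cref{product} gives $\det(D)=\prod_i\det(D_i)\neq 0$. Since \Cref{lem-split} already handles non-reduced alternating diagrams internally, there is no need to strip nugatory crossings from $D$ yourself or to track how the semi-alternating structure survives that reduction. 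Your route works too, but the paper's shortcut eliminates the one step you were uneasy about.
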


\begin{proof}
Clearly if $D$ is split, then $L$ is split and also $w$-split. 

On the other hand, suppose $D$ is non-split. Since $D$ is semi-alternating, we can write $D=D_1 \# \cdots \# D_n$, where $D_1,\ldots, D_n$ are all non-split, alternating diagrams.  \Cref{lem-split} implies that $\det(D_i) \neq 0$ for $i=1,\ldots, n.$ \Cref{product} implies that $\det(D)= \prod_{i=1}^n \det(D_i) \neq 0.$
Therefore, $\det(L) \neq 0$, and by \Cref{w-split}, it follows that $L$ is not $w$-split.
\end{proof}
 
\setcounter{theorem}{0} 
\setcounter{section}{7} \noindent
\subsection{The Tait conjectures for welded links}\label{sec-7}

In his early work on knot tabulation, Tait formulated three far-reaching conjectures on reduced alternating classical link diagrams \cite{Tait}. (A link diagram is \textit{reduced} if it does not contain a nugatory crossing.) They assert that, for a non-split link, any two reduced alternating diagrams have the same crossing number, the same writhe, and are related by a sequence of flype moves. The first two statements were famously solved by Kauffman, Murasugi, and Thistlethwaite using the recently discovered Jones polynomial \cite{Kauffman-87, Murasugi-871, Thistlethwaite-87}, and the third statement was subsequently proved by Menasco and Thistlethwaite \cite{Tait3}. The three Tait conjectures lead to a simple and effective algorithm for tabulating alternating knots and links that has been implemented \cite{RF-2004, Knotilus}.

It is an interesting question whether similar results hold for virtual and/or welded links. For example, analogues of the first and second Tait conjectures have been established for virtual links using the Jones-Krushkal polynomial and skein bracket, see \cite{Boden-Karimi-2019, Boden-Karimi-Sikora}. 

\begin{problem}
Is the Tait flype conjecture true for alternating virtual links?
\end{problem}

\begin{figure}[h!]
\centering\includegraphics[scale=1.50]{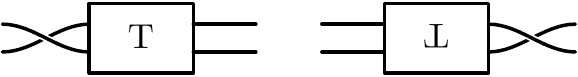}
\caption{\small The flype move.}\label{flype}
\end{figure}

The flype move is depicted in \Cref{flype}. By assumption, the tangle $T$ does not contain any virtual crossings. Allowing the tangle to contain virtual crossings results in a more general move called a \textit{virtual flype move}. The virtual flype move does not, in general, preserve the virtual link type, for example, see \cite{Kamada-17, ZJZ-04}.

It is unknown whether the Tait conjectures hold for welded links. More generally, what conditions must the invariants of welded link satisfy in order for it to be alternating?

Since $\det(L)$ is an invariant of welded links, any checkerboard colorable virtual $L$ with $\det(L)\neq 1$ is nontrivial as a welded link. In particular, \Cref{link determinant} applies to show that any non-split virtual link represented by a reduced, alternating diagram has $\det(L) \neq 1$ and therefore, is nontrivial as a welded  link.

The Alexander polynomial $\De_L(t)$ is also an invariant of the welded type. Therefore, if $L$ is almost classical and $\De_L(t)$ is not alternating, then $L$ is not welded equivalent to an alternating link. 

\begin{figure}[h!]
\centering\includegraphics[height=22mm]{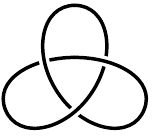} \qquad  
\includegraphics[height=24mm]{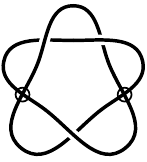}\qquad  
\includegraphics[height=24mm]{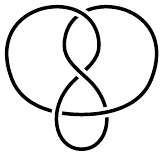}\qquad  
\includegraphics[height=26mm]{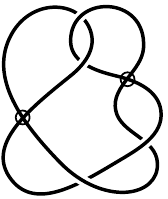}\qquad  
\includegraphics[height=28mm]{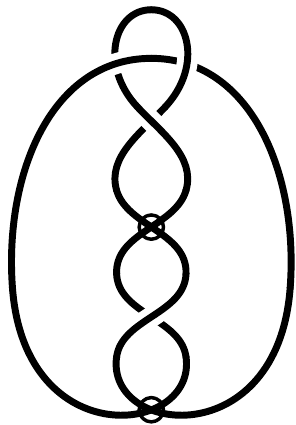}  

\caption{\small Alternating welded knots with 3 and 4 classical crossings.}\label{alternating-welded-knots}
\end{figure}

\Cref{alternating-welded-knots} shows the five alternating welded knots with up to four classical crossings. All the others can be ruled out using the consideration that $\det(K) \geq n$, the crossing number.

\begin{problem}
Is the first Tait conjecture true for alternating welded links? 
\end{problem}

One can find examples of virtual knots which are non-alternating but which become alternating after adding one crossing. For example, consider Examples 19 and 20, \cite{Boden-Karimi-Sikora}. The first is non-alternating and has six crossings; the second is alternating and is obtained by adding a nugatory crossing. The two virtual knots are welded equivalent (see \Cref{nugatory}). We conjecture that there exist welded knots which are alternating, but every minimal crossing diagram for them is non-alternating.

\begin{problem}
Is it possible for an alternating welded knot to represent a non-alternating classical knot?
\end{problem}

\begin{figure}[h!]
\centering\includegraphics[height=30mm]{Figures/4-106.pdf} \qquad \qquad
\includegraphics[height=28mm]{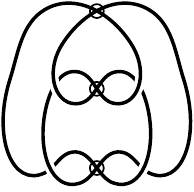}
\caption{\small Reduced alternating diagrams for the virtual knots $4.106$ and $4.107$.}\label{alternating-diagrams}
\end{figure}

Interestingly, there are pairs of reduced alternating virtual knot diagrams which are equivalent as welded knots but distinct as virtual knots. In particular, this implies that Tait's second conjecture is not true for welded knots.

\begin{figure}[h!]
\centering
\includegraphics[height=21mm]{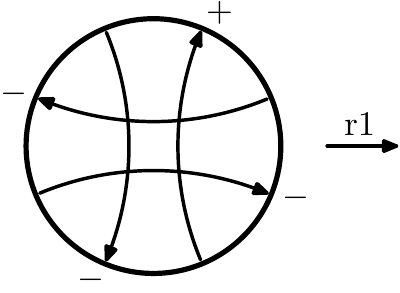}
\includegraphics[height=21mm]{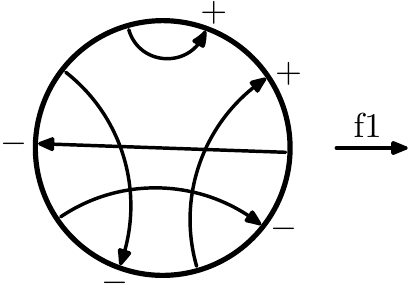}
\includegraphics[height=21mm]{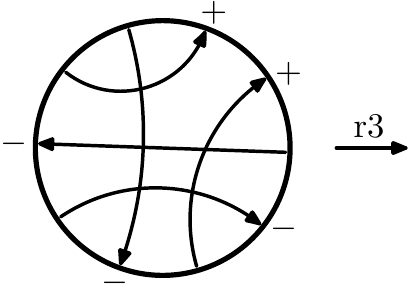}
\includegraphics[height=21mm]{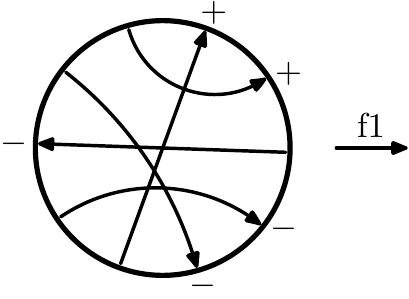}
\includegraphics[height=21mm]{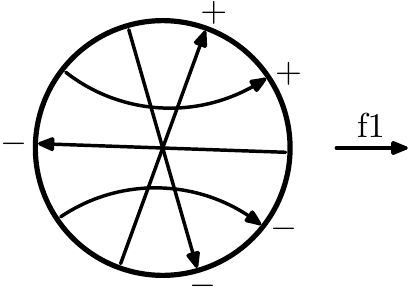}
\includegraphics[height=21mm]{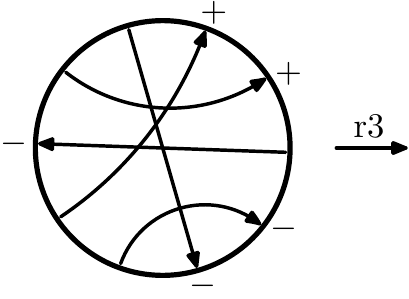}
\includegraphics[height=22mm]{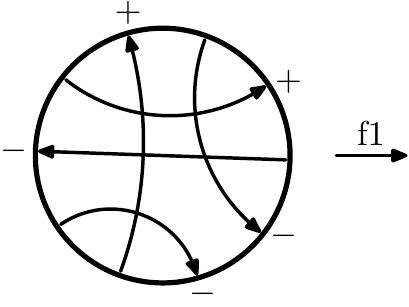}
\includegraphics[height=22mm]{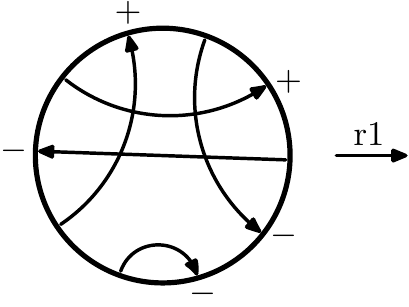}
\includegraphics[height=22mm]{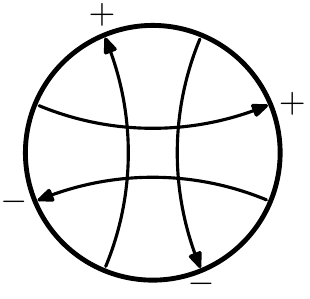}
\caption{\small A sequence of moves on the Gauss diagrams, starting from a diagram of $4.106$ ending in a diagram for $4.107$.}\label{Gauss-moves}
\end{figure}

For example, consider the virtual knots $4.106$ and $4.107$ in \Cref{alternating-diagrams}.
Both are reduced alternating diagrams, but the diagram for $4.106$ has writhe $w=-2$
whereas the diagram for $4.107$ has writhe $w=0.$ Tait's second conjecture holds for 
reduced alternating virtual knot diagrams \cite{Boden-Karimi-2019}, and thus comparing the writhes tells us these two are distinct as virtual knots. However, these diagrams are equivalent as welded knots
(see \Cref{Gauss-moves} and \Cref{diagram-moves}). Since both diagrams are reduced and alternating, this shows that the writhe of a reduced alternating diagram is not invariant under welded equivalence.

This implies that the second Tait conjecture is not true in the welded category. Since the Tait flype move preserves the writhe, this also shows that the Tait's third conjecture, if true, must take a different form in the virtual and welded settings.

\begin{figure}[h]
\centering
\includegraphics[height=28mm]{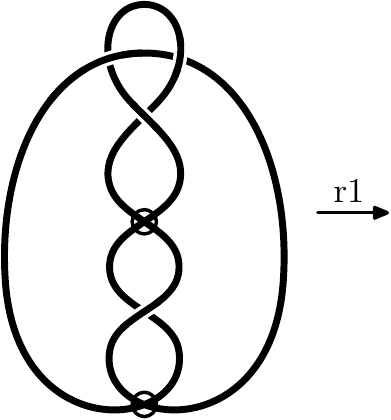}
\includegraphics[height=28mm]{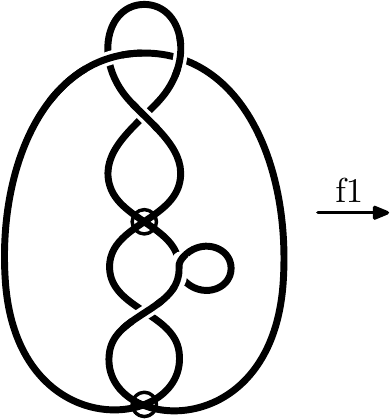}
\includegraphics[height=28mm]{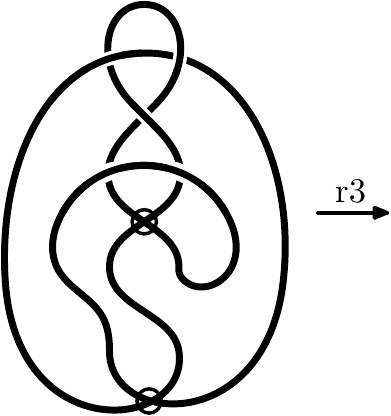}
\includegraphics[height=28mm]{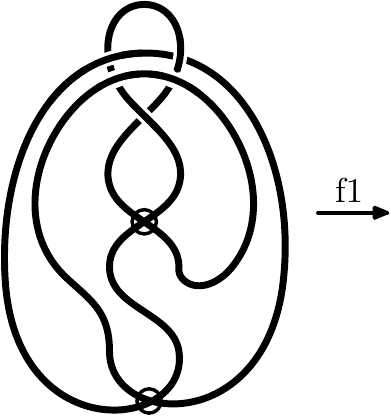}
\includegraphics[height=24mm]{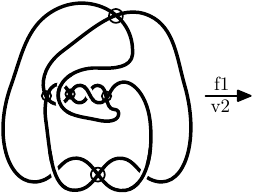}
\includegraphics[height=24mm]{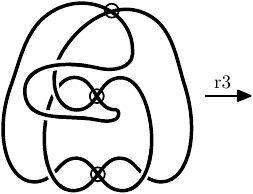}
\includegraphics[height=24mm]{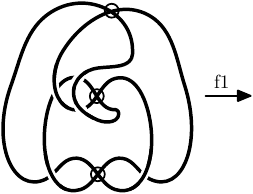}
\includegraphics[height=24mm]{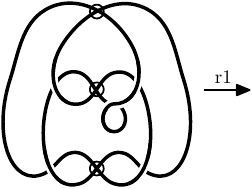}
\includegraphics[height=24mm]{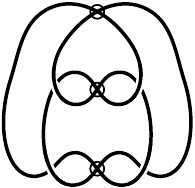}
\caption{\small A sequence of moves on the virtual diagrams, starting from a diagram of $4.106$ ending in a diagram for $4.107$. The fourth and fifth diagrams are related by an $f1$ move, and this is seen by comparing Gauss diagrams.}\label{diagram-moves}
\end{figure}


\subsection*{Acknowledgements}
This paper is based on several ideas in the Ph.D. thesis of the second author \cite{Karimi}.
The authors would like to thank Robin Gaudreau, Andy Nicas, Will Rushworth, and Adam Sikora for their helpful comments and feedback.  They would also like to thank the referee for their input.
The first author gratefully acknowledges grant funding from the Natural Sciences and Engineering Research Council of Canada.
 
\newcommand{\etalchar}[1]{$^{#1}$}

\end{document}